\documentclass[reqno,12pt]{amsart}

\usepackage{enumerate}
\usepackage[margin=1in]{geometry}
\usepackage{ifpdf}
\usepackage{amsmath}
\usepackage{amsfonts}
\usepackage{amssymb}
\usepackage{amsthm}
\usepackage[hyperfootnotes=false]{hyperref}
\usepackage{amsrefs}
\usepackage{graphicx}

%% Show keys for labels in the PDF
%\newcommand{\showkeyslabelformat}[1]{%
% {\parbox{0.7in}{\normalfont\fontsize{6}{7}\selectfont\ttfamily#1}}}
%\usepackage[notref,notcite]{showkeys}
%%\usepackage[margin=1.5in]{geometry}
%%\definecolor{refkey}{rgb}{0.2,0.2,1}
%%\definecolor{labelkey}{rgb}{0.2,0.2,1}

% useful
\newcommand{\ignore}[1]{}

% analysis/geometry stuff

% reals

\newcommand{\abs}[1]{\left\lvert {#1} \right\rvert}
\newcommand{\norm}[1]{\left\lVert {#1} \right\rVert}
\newcommand{\sabs}[1]{\lvert {#1} \rvert}
\newcommand{\snorm}[1]{\lVert {#1} \rVert}

% sets (some)
\newcommand{\C}{{\mathbb{C}}}
\newcommand{\R}{{\mathbb{R}}}

\newcommand{\Q}{{\mathbb{Q}}}

% consistent

\newcommand{\bN}{{\mathbb{N}}}

\newcommand{\sN}{{\mathcal{N}}}

\newcommand{\sP}{{\mathcal{P}}}

\newcommand{\sZ}{{\mathcal{Z}}}

% Topo stuff

\newcommand{\rank}{\operatorname{rank}}

%extra thingies

\newcommand{\bfe}{\mathbf{e}}

\newtheorem{thm}{Theorem}[section]

\newtheorem{prop}[thm]{Proposition}

\newtheorem{cor}[thm]{Corollary}

\newtheorem{lemma}[thm]{Lemma}

\theoremstyle{definition}

\theoremstyle{remark}
\newtheorem{remark}[thm]{Remark}

\author{Jennifer Halfpap}
\thanks{The first author was supported in part by NSF grant DMS 1200815.}
\address{Department of Mathematical Sciences, University of Montana, Missoula, MT 59812, USA}
\email{halfpap@mso.umt.edu}

\author{Ji\v{r}\'{\i} Lebl}
\thanks{The second author was supported in part by NSF grant DMS 0900885.}
\address{Department of Mathematics, University of Wisconsin,
Madison, WI 53706, USA}
\email{lebl@math.wisc.edu}

%\date{\today}
\date{April 19, 2013}

\ifpdf
\hypersetup{
  pdftitle={Signature pairs of positive polynomials},
  pdfauthor={Jennifer Halfpap and Jiri Lebl}
}
\fi

\title{Signature pairs of positive polynomials}

\begin{document}

%\doublespace

\begin{abstract}
A well-known theorem of Quillen says that if $r(z,\bar{z})$ is a
bihomogeneous polynomial on $\C^n$ positive on the sphere,
then there exists $d$ such that
$r(z,\bar{z})\snorm{z}^{2d}$ is a squared norm.
We obtain effective bounds relating this $d$ to the signature of $r$.
We obtain the sharp bound for $d=1$, and for $d > 1$ we obtain
a bound that is of the correct order as a function of $d$
for fixed $n$. The current work adds to an extensive literature on positivity classes for real polynomials.
The classes $\Psi_d$ of polynomials for which
$r(z,\bar{z})\snorm{z}^{2d}$ is a squared norm
interpolate between polynomials positive on the sphere and
those that are Hermitian sums of squares.
\end{abstract}

\maketitle

\section{Introduction}

Let $r(z,\bar{z})$ be a real polynomial
on $\C^n$.  A basic question one can ask is whether $r(z,\bar{z}) \geq 0$.
One way to show that a polynomial is
nonnegative is to write it as a sum of Hermitian squares
\begin{equation} \label{eq:sos}
\sum_{j=1}^N \abs{f_j(z)}^2
\end{equation}
for holomorphic polynomials $f_j$, i.e., as the squared norm
$\snorm{f(z)}^2$
of a holomorphic mapping $f$.
There exist, however, nonnegative polynomials that cannot be written as a squared norm; to construct an easy example, take $r$ nonnegative but with zero set a real hypersurface. For a much more subtle example, consider Example VI.3.6 in \cite{DAngeloCarus}:
\begin{equation}
r(z,\bar{z})=(\abs{z_1 z_2}^2-\abs{z_3}^4)^2+\abs{z_1}^8.
\end{equation}
This polynomial is non-negative, its zero set is a complex line, and yet it cannot even be written as a quotient of squared norms.

Thus the condition that a real polynomial is a squared norm is too restrictive, and one is motivated to formulate other less restrictive positivity conditions.
See
\cites{DAngeloCarus,Quillen,CD,DAngelo:hilbert,DAngeloVarolin} and the references within.
A theorem of Quillen \cite{Quillen}, proved independently by Catlin and D'Angelo
\cite{CD}, states that
if a bihomogeneous polynomial
\begin{equation}
r(z,\bar{z})=\sum_{\sabs{\alpha}=\sabs{\beta}=m}
c_{\alpha \beta}z^\alpha \bar{z}^\beta
\end{equation}
is positive on the unit sphere, then there exists an integer $d$ such that
$r(z,\bar{z}) {(\norm{z}^2)}^d = r(z,\bar{z}) \norm{z}^{2d}$
is a squared norm, and hence $r$ is a quotient of squared norms.
Thus one obtains a Hermitian analogue of Hilbert's 17th problem.

With this motivation, we define a set of positivity classes $\Psi_d$ of bihomogeneous polynomials
by
\begin{align}
& \Psi_d = \{ r : r(z,\bar{z}) \norm{z}^{2d} \text{ is a Hermitian sum of
squares} \} ,
\\
& \Psi_\infty = \bigcup_{d=0}^\infty \Psi_d .
\end{align}
$\Psi_0$ consists of the squared norms themselves, and, by the theorem
mentioned above, $\Psi_\infty$
contains the polynomials positive on the sphere. It is not difficult (Proposition~\ref{prop:qsdistinct}) to construct polynomials that show
\begin{equation}
\Psi_0 \subsetneq
\Psi_1 \subsetneq
\Psi_2 \subsetneq
\Psi_3 \subsetneq \dots
.
\end{equation}

Every real polynomial $r$ has a holomorphic decomposition
\begin{equation}\label{eq: holo decomp}
r(z,\bar{z}) = \sum_{j=1}^{N_+} \abs{f_j(z)}^2 - \sum_{j=1}^{N_-}
\abs{g_j(z)}^2
\end{equation}
for holomorphic polynomials $f_j$, $g_j$.
When $N_+$ and $N_-$ are minimal (which occurs when
$f_1,\ldots,f_{N_+},g_1,\ldots,g_{N_-}$ are linearly independent),
we say that $r$ has signature pair $(N_+,N_-)$ and rank
$N_++N_-$.  While $f$ and $g$ are not unique, the signature pair $(N_+,N_-)$
is.

We will be particularly concerned with $\Psi_1$.  This class is
connected to the study of proper holomorphic mappings between balls in
complex Euclidean spaces of different dimensions.
For example, if $f \colon \C^n \to \C^N$ is a polynomial that
takes the unit ball to the unit ball properly,
then $\snorm{f(z)}^2-1 = p(z,\bar{z}) (\snorm{z}^2-1)$.  In particular,
if $f$ is of degree $d$ and $f_d$ is the degree $d$ part
of $f$, then
$\snorm{f_d(z)}^2 = p_{d-1}(z,\bar{z}) \snorm{z}^2$.
Polynomials in $\Psi_1$ also arise when studying the second fundamental
form of more general mappings between balls.  See the recent work
by Ebenfelt~\cite{Ebenfelt:partrig} and
the references within.  For example, by proving
that $p(z,\bar{z}) \snorm{z}^2$ must be of rank at least $n$,
Huang~\cite{huang:lin} proved that all proper mappings between
balls that are sufficiently smooth on the boundary
are equivalent to the linear embeddings if $N < 2n-1$.

Our main result for the positivity class $\Psi_1$ is the following.
\begin{thm} \label{thm:thmd1}
Let $r(z,\bar{z})$ be a real polynomial on $\C^n$, $n \geq 2$, and suppose
that $r(z,\bar{z}) \norm{z}^2$ is a squared norm.  Let $(N_+,N_-)$ be the
signature pair of $r$.  Then
\begin{enumerate}[(i)]
\item
\begin{equation} \label{eq:theboundd1}
\frac{N_-}{N_+} < n-1.
\end{equation}
\item The above inequality is sharp, i.e., for every $\varepsilon >0$ there exists $r$ with
$\frac{N_-}{N_+} \geq n-1-\varepsilon$.
\end{enumerate}
\end{thm}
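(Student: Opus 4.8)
The plan is to recast the hypothesis in terms of a single multiplication map and then play two dimension estimates against each other. Fix a minimal holomorphic decomposition $r=\sum_{j=1}^{N_+}\abs{f_j}^2-\sum_{k=1}^{N_-}\abs{g_k}^2$, so that $f_1,\dots,f_{N_+},g_1,\dots,g_{N_-}$ are linearly independent; let $V$ be their span, $\dim V=N_++N_-$, and let $W=\operatorname{span}\{z_iv:1\le i\le n,\ v\in V\}$, equivalently the span of $\{z_if_j\}\cup\{z_ig_k\}$. Using $\snorm{z}^2\abs{p}^2=\sum_i\abs{z_ip}^2$ and multiplying the decomposition by $\snorm{z}^2$, we get $r\snorm{z}^2=\sum_{i,j}\abs{z_if_j}^2-\sum_{i,k}\abs{z_ig_k}^2$. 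Since $r\snorm{z}^2$ is a squared norm, the Hermitian form it carries (the matrix of its monomial coefficients) is positive semidefinite; but that form has the same signature as the restriction of $\operatorname{diag}(I_{nN_+},-I_{nN_-})$ to the span of the values of the tuple $\bigl((z_if_j),(z_ig_k)\bigr)\colon\C^n\to\C^{nN_+}\oplus\C^{nN_-}$, a subspace of dimension $\dim W$. A positive semidefinite subspace for $\operatorname{diag}(I_{nN_+},-I_{nN_-})$ has dimension at most $nN_+$, so $\dim W\le nN_+$.

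Second, I would prove a multiplication lemma: for any nonzero subspace $P\subseteq\C[z_1,\dots,z_n]$ one has $\dim\operatorname{span}\{z_ip:i,\ p\in P\}\ge\dim P+n-1$. Fixing a monomial order, the leading monomials of $P$ number $\dim P$, and the leading monomials of $\operatorname{span}\{z_ip\}$ contain the Minkowski sum $L(P)+\{e_1,\dots,e_n\}$ of exponents in $\Z^n$; since $\abs{A+B}\ge\abs A+\abs B-1$ for finite sets in a torsion-free group, taking $B=\{e_1,\dots,e_n\}$ gives the bound (alternatively: relabel so a generic linear functional has $\phi(e_1)<\dots<\phi(e_n)$, delete the $\phi$-maximizer $a^*$ of $A$, note $a^*+e_n$ is a new point, and induct on $\abs A$). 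Applying this to $V$ yields $\dim W\ge N_++N_-+n-1$, and combining the two estimates
\[
N_++N_-+n-1\ \le\ \dim W\ \le\ nN_+,\qquad\text{hence}\qquad N_-\le(n-1)(N_+-1),
\]
so $N_-/N_+\le(n-1)(1-1/N_+)<n-1$. This proves (i).

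For (ii) the extremal examples should make both estimates (almost) sharp: $\{z_if_j\}$ linearly independent (so $\dim W=nN_+$) and $\dim V=N_++N_-$ only a bounded amount below $nN_+$. For $n=2$ this is realized by the bihomogeneous family (for even $k$) $r=\sum_{j=0}^k(-1)^j\abs{z_1}^{2(k-j)}\abs{z_2}^{2j}$: its $k+1$ monomials are independent, so its signature pair is $(\tfrac k2+1,\tfrac k2)$, while $r\snorm{z}^2=\abs{z_1}^{2(k+1)}+\abs{z_2}^{2(k+1)}$ is a squared norm because $t_1+t_2$ divides $t_1^{k+1}+t_2^{k+1}$ (with $t_i=\abs{z_i}^2$); thus $N_-/N_+=\tfrac{k/2}{k/2+1}\to1=n-1$. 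For $n\ge3$ such diagonal (``pure'') polynomials cannot work: summing the coefficient inequalities that say a diagonal $r\snorm{z}^2$ is a squared norm forces $N_+\ge N_-$, so the ratio stays $\le1$.

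Hence the real work in (ii) is producing, for each $\varepsilon$ and $n\ge3$, a genuinely non-diagonal $r$ with $N_-/N_+\ge n-1-\varepsilon$. A promising route is to take $r$ bihomogeneous of bidegree $(m,m)$ whose Hermitian form on $\C[z]_m$ is $B=I-2\Pi$ for a rank-$N_-$ projection $\Pi$; then $r\snorm{z}^2$ is a squared norm exactly when $\mu(B\otimes I)\mu^*\succeq0$, where $\mu\colon\C[z]_m\otimes\C^n\to\C[z]_{m+1}$ is multiplication, and since $\mu\mu^*$ is a scalar this reduces to an ``angle'' condition between $(\im\Pi)\otimes\C^n$ and $\ker\mu$ inside $\C[z]_m\otimes\C^n$; pushing $N_-$ toward its maximum $\dim\C[z]_m-\tfrac1n\dim\C[z]_{m+1}$ as $m\to\infty$ drives $N_-/N_+\to n-1$. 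The obstacle I expect is precisely this last step — choosing $\Pi$ (or more generally $r$) so the positivity/angle condition holds while $N_-$ is essentially maximal, and then verifying that the resulting $f_j,g_k$ are linearly independent so that the signature pair is the claimed one; a recursive construction of the $\C^n$ example from the $\C^{n-1}$ one is an alternative worth trying.
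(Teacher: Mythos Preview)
Your argument for (i) is correct and in fact cleaner than the paper's. Both routes obtain $\dim W\le nN_+$ from the same observation: the column space of the coefficient matrix of the tuple $(z_if_j,z_ig_k)$ is a nonnegative subspace for $\operatorname{diag}(I_{nN_+},-I_{nN_-})$; in the paper this appears as the statement that ``the rows of $Q$ lie in the span of the rows of $P$.'' The difference is in the lower bound on $\dim W$. The paper first replaces $r$ by a polynomial of the same signature whose matrix is in \emph{partial row-echelon form} (Lemma~\ref{lemma:normform}, using an $SU(1,1)$-type row operation together with the rescaling of Proposition~\ref{prop:lambdanorm}), so that the leading monomials of the $f_j$ and $g_k$ become pairwise distinct, and then carries out a fairly delicate pivot count in the stacked matrix $\tilde C$. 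Your multiplication lemma $\dim\operatorname{span}\{z_ip:p\in P\}\ge\dim P+n-1$, proved via the Minkowski-sum inequality $\lvert A+B\rvert\ge\lvert A\rvert+\lvert B\rvert-1$ on leading exponents, bypasses that reduction entirely and even yields the slightly sharper explicit bound $N_-\le(n-1)(N_+-1)$.

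For (ii), your $n=2$ family is fine, but the claim for $n\ge3$ is wrong: diagonal polynomials \emph{do} suffice, and this is precisely how the paper establishes sharpness. The assertion that ``summing the coefficient inequalities forces $N_+\ge N_-$'' does not hold---summing the conditions $\sum_i c_{\beta-e_i}\ge0$ over all $\beta$ only yields $\sum_\alpha c_\alpha\ge0$, which constrains the \emph{magnitudes} of the coefficients, not the \emph{count} of positive versus negative ones. The paper (Theorem~\ref{thm:monthm1}(iii)) writes down an explicit homogeneous family $p_D(x)=\sum_{\lvert\alpha\rvert=D}\gamma(\alpha)x^\alpha$ with $\gamma(\alpha)=n-1$ when some $\alpha_k=0$ or when $\sum_{k=1}^{n-1}k\alpha_k\equiv D\pmod n$, and $\gamma(\alpha)=-1$ otherwise. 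A direct check shows $p_D\cdot(x_1+\cdots+x_n)$ has nonnegative coefficients (whenever one of the $n$ contributing $\gamma$'s equals $-1$, the congruence forces another to equal $n-1$), and a straightforward count gives $N_-(p_D)/N_+(p_D)\to n-1$ as $D\to\infty$. So the non-diagonal projection/angle construction you sketch is unnecessary, and the ``obstacle'' you anticipated is not there.
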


\begin{remark}
The case for $n=1$ is trivial; $\norm{z}^2 =
\abs{z}^2$ and so $r(z,\bar{z})\abs{z}^2$ has the same signature
as $r$. Therefore, if $r(z,\bar{z})\abs{z}^2$ is a squared norm,
then $r$ is a squared norm and $N_- = 0$.
\end{remark}

When $d > 1$, the combinatorics becomes more involved.  We obtain
the following bound.
\begin{thm} \label{thm:thmalld}
Let $r(z,\bar{z})$ be a real polynomial on $\C^n$, $n \geq 2$, $d \geq 1$,
and
suppose that $r(z,\bar{z}) \norm{z}^{2d}$ is a squared norm.
Let $(N_+,N_-)$
be the signature pair of $r$.  Then
\begin{enumerate}[(i)]
\item
\begin{equation}
\frac{N_-}{N_+} \leq \binom{n-1+d}{d} -1.
\end{equation}
\item For each fixed $n$, there exists a positive constant $C_n$ such that
for each $d$ there is a polynomial $r \in \Psi_d$ with
$\frac{N_-}{N_+} \geq C_n d^{n-1}$.
\end{enumerate}
\end{thm}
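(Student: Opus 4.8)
The plan is to handle the two parts by rather different methods: part~(i) by a dimension count for the operation ``multiply by a homogeneous polynomial of degree $d$'', and part~(ii) by an explicit construction in which $r$ is a weighted difference of squared norms of monomials.

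For part~(i): write $r=\snorm{f}^2-\snorm{g}^2$ in a minimal holomorphic decomposition, so that $f_1,\dots,f_{N_+},g_1,\dots,g_{N_-}$ are linearly independent; set $U=\operatorname{span}\{f_1,\dots,f_{N_+}\}$ and $W=\operatorname{span}\{g_1,\dots,g_{N_-}\}$, so $\dim U=N_+$, $\dim W=N_-$, and $U\cap W=\{0\}$. Write $V_d$ for the space of homogeneous polynomials of degree $d$ in $z_1,\dots,z_n$ (so $\dim V_d=\binom{n-1+d}{d}$) and, for a space of polynomials $X$, write $V_dX=\operatorname{span}\{pq:p\in V_d,\ q\in X\}$. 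From $r\norm{z}^{2d}=\snorm{h}^2$ I get the polynomial identity $\snorm{f}^2\norm{z}^{2d}=\snorm{h}^2+\snorm{g}^2\norm{z}^{2d}$; the (unique) Hermitian matrices representing these three polynomials add and are all positive semidefinite, so the matrix of $\snorm{f}^2\norm{z}^{2d}$ dominates that of $\snorm{g}^2\norm{z}^{2d}$, and hence the range of the latter lies in the range of the former. Using $\norm{z}^{2d}=\sum_{|\gamma|=d}\binom{d}{\gamma}\abs{z^\gamma}^2$, those ranges are exactly $V_dU$ and $V_dW$, so $V_dW\subseteq V_dU$; consequently $V_d(U+W)=V_dU+V_dW=V_dU$.

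The remaining ingredient is the elementary lemma that, for \emph{any} nonzero space $X$ of polynomials, $\dim(V_dX)\ge\dim X+\binom{n-1+d}{d}-1$. To prove this I would first divide the elements of $X$ by their greatest common divisor, which changes neither $\dim X$ nor $\dim(V_dX)$ and ensures that some $x_0\in X$ is not divisible by some variable $z_j$; then the linear map $X\to V_dX/(x_0V_d)$, $x\mapsto z_j^dx+x_0V_d$, has kernel inside $\C x_0$, since $z_j^dx\in x_0V_d$ forces $x_0\mid x$ (as $\gcd(x_0,z_j^d)=1$) while the same relation forces $\deg x=\deg x_0$, whence $x\in\C x_0$. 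Thus $\dim\bigl(V_dX/(x_0V_d)\bigr)\ge\dim X-1$, and adding $\dim(x_0V_d)=\dim V_d$ gives the lemma. Applying it to $X=U+W$ (nonzero, with $\dim(U+W)=N_++N_-$) together with the trivial bound $\dim(V_dU)\le\dim V_d\cdot\dim U$ yields
\[
N_++N_-+\binom{n-1+d}{d}-1\ \le\ \dim\bigl(V_d(U+W)\bigr)=\dim(V_dU)\ \le\ \binom{n-1+d}{d}N_+,
\]
which rearranges to $N_-\le\bigl(\binom{n-1+d}{d}-1\bigr)(N_+-1)$, giving $\frac{N_-}{N_+}\le\binom{n-1+d}{d}-1$ (indeed strictly, so long as $r\ne 0$); for $d=1$ this recovers Theorem~\ref{thm:thmd1}(i).

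For part~(ii): fix $n$, let $d\ge 2(n-1)$, put $s=\lfloor d/(n-1)\rfloor\ge 2$, choose a large integer $m$, and take the bidegree-$(m,m)$ polynomial
\[
r(z,\bar z)=(n^d+1)\sum_{a\in T}\abs{z^a}^2-\sum_{a\in S}\abs{z^a}^2,
\]
where $T=\{a\in\Z_{\ge 0}^n:|a|=m,\ s\mid a_i\text{ for }1\le i\le n-1\}$ and $S=\{a\in\Z_{\ge 0}^n:|a|=m,\ a_n\ge d\}\setminus T$. The key point is that every multi-index $\delta$ with $|\delta|=m+d$ and $\delta_n\ge d$ dominates some $a\in T$ coordinatewise: rounding $\delta_1,\dots,\delta_{n-1}$ down to multiples of $s$ lowers their sum by at most $(n-1)(s-1)<d$, and since $\sum_{i<n}\delta_i=m+d-\delta_n\le m$ the rounded tuple completes, via $a_n=m-\sum_{i<n}a_i$, to an $a\in T$ with $0\le a\le\delta$. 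Two consequences follow. First, expanding $r\norm{z}^{2d}$ in the monomial basis (all terms diagonal), any $\abs{z^\delta}^2$ occurring with a negative coefficient has $\delta_n\ge d$, hence also occurs with a positive coefficient, which is at least $n^d+1$; since its negative coefficient is at most $\sum_{|\gamma|=d}\binom d\gamma=n^d$, every coefficient of $r\norm{z}^{2d}$ is nonnegative, so $r\norm{z}^{2d}$ is a Hermitian sum of squares and $r\in\Psi_d$. Second, the monomials $z^a$ ($a\in T\sqcup S$) are linearly independent, so $r$ has signature pair $(|T|,|S|)$. Finally $|T|=\binom{\lfloor m/s\rfloor+n-1}{n-1}$ and $|S|\ge\binom{m-d+n-1}{n-1}-|T|$; letting $m\to\infty$ with $d$ fixed, $\frac{N_-}{N_+}=\frac{|S|}{|T|}$ exceeds $\frac12 s^{\,n-1}\ge C_n d^{\,n-1}$ for a constant $C_n>0$ depending only on $n$. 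The finitely many values $1\le d<2(n-1)$ are covered by taking an example in $\Psi_1\subseteq\Psi_d$ from Theorem~\ref{thm:thmd1}(ii) — for these $d$ the quantity $d^{n-1}$ is bounded — and shrinking $C_n$ if necessary.

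The step I expect to be the main obstacle: in part~(i) it is the dimension lemma together with the collapsing identity $V_d(U+W)=V_dU$; once those are in place the estimate is forced (and in fact one gets the slightly stronger strict inequality). In part~(ii) the real work is exhibiting a \emph{small} set $T$ which, translated by degree-$d$ multi-indices, covers the whole relevant simplex of degree-$(m+d)$ multi-indices: a plain coordinatewise-residue lattice fails near the facet $\{a_n=0\}$, and restricting $S$ to $\{a_n\ge d\}$ is precisely the device that confines the covering problem to the region where the lattice succeeds, at negligible cost to the ratio $|S|/|T|$.
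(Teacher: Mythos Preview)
Your proof is correct in both parts, and both parts take genuinely different routes from the paper's.

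For part~(i), the paper first proves a normalization lemma (Lemma~\ref{lemma:normform}) that replaces $r$ by an $\tilde r$ whose coefficient matrix is in partial row-echelon form, using transformations from the indefinite unitary group together with Proposition~\ref{prop:lambdanorm}; it then carries out a combinatorial count of leading columns in the stacked matrices $C_1,\dots,C_{\binom{n-1+d}{d}}$. Your argument bypasses all of this: from the PSD identity $M_f=M_h+M_g$ you get $\operatorname{range}(M_g)\subseteq\operatorname{range}(M_f)$, i.e.\ $V_dW\subseteq V_dU$, and the elementary dimension lemma $\dim(V_dX)\ge\dim X+\dim V_d-1$ finishes the job. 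This is shorter, avoids the row-echelon machinery entirely, and in fact yields the slightly sharper $N_-\le\bigl(\binom{n-1+d}{d}-1\bigr)(N_+-1)$. The paper's approach has the virtue of being more hands-on and making the role of leading monomials explicit; yours is cleaner and makes clear that nothing special about $\snorm{z}^{2d}$ beyond its rank is used (so your method also yields the Corollary in \S5 directly, with the same dimension lemma applied to $V=\operatorname{span}\{z^{\alpha_j}\}$ once one checks that some monomial in $V$ is coprime to $x_0$).

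For part~(ii), the paper proceeds by induction on $n$: it layers copies of an $(n-1)$-variable example along the $x_{n-1}$-axis, then improves the ratio by skipping $\nu$ rows at the cost of lowering $d$ by $\nu$, and optimizes $\nu\approx d/2$ to get $C_n=2^{-n(n-1)/2}$. Your construction is a single direct shot: a coarse sublattice $T$ of spacing $s=\lfloor d/(n-1)\rfloor$ in the first $n-1$ coordinates, with $S$ restricted to $\{a_n\ge d\}$ precisely so that the rounding-down covering argument goes through. This is more explicit and yields a constant of order $(2(n-1))^{-(n-1)}$, which for large $n$ is actually better than the paper's. The inductive approach is more flexible if one wants to refine the constant in low dimensions (cf.\ the knight's-move remark for $n=3$); yours is more transparent for seeing why the order $d^{\,n-1}$ is correct.
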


Since $\binom{n-1+d}{d}$ is a polynomial in $d$ of degree $n-1$, the second item says that the bound we obtain is of the
correct order, although we do not believe it to be sharp for all $n$ (it is
sharp when $n=2$).

For bihomogeneous polynomials we obtain bounds for the ratios of positive
and negative eigenvalues for the classes $\Psi_d$.  A very interesting
problem is to find the smallest $d$ so that
a positive polynomial is in $\Psi_d$; see
the work of To and Yeung~\cite{ToYeung}.
An upper bound must involve
the magnitude of the coefficients.
To see this, consider an example from \cite{DAngeloVarolin}:
\begin{equation}
{(\abs{z}^2+\abs{w}^2)}^4-\lambda\abs{zw}^4 .
\end{equation}
As $\lambda \to 16$, one requires larger and larger $d$.
On the other hand, our results give an effective lower bound on $d$
given the numbers $N_-$ and $N_+$.

We also address the analogous question for real polynomials, i.e., what can we say about a polynomial
$p \in \R[x_1,\ldots,x_n]$ if it is known that $(x_1+\cdots+x_n)^d p(x)$ has
non-negative coefficients?  P\'olya proved in~\cite{Polya} that for each
$p$ positive on the positive quadrant, there exists a $d$
such that $(x_1+\cdots+x_n)^d p(x)$ has only positive coefficients.
Recent work (for example~\cite{PowersReznick}) focuses on finding
an upper bound on $d$
given information about $p$.
Our work can be thought of as finding
lower bounds on $d$ given the signature of $p$ in a somewhat more general setting.

When we complexify a real polynomial we obtain a Hermitian symmetric
polynomial, i.e., one satisfying $r(z,\bar{w}) = \overline{r(w,\bar{z})}$.  Hermitian
symmetric polynomials arise naturally in complex geometry, in particular,
degree $d$ Hermitian symmetric polynomials arise as globalizable metrics on
the $d$th power of the universal bundle over the complex projective space;
see~\cite{DAngeloVarolin}.

Questions about multiples of $\snorm{z}^{2d}$ also arise in several contexts. As mentioned above,
Huang~\cite{huang:lin} proved that
$p(z,\bar{z}) \snorm{z}^2$ must have rank at least $n$.
Generalizing this result, in \cite{DL:pfi} it was shown that the rank of
$p(z,\bar{z}) \snorm{z}^{2d}$ is bigger than or equal to
the rank of $\snorm{z}^{2d}$.  A theorem of
Pfister says the if $p \geq 0 $ for a polynomial $p$ of $n$ real variables, there exists a polynomial $q$ such that $q^2p$ is a sum of at most $2^n$ squared polynomials.  Thus  \cite{DL:pfi} shows that Pfister's theorem fails in the Hermitian context.

Finally, ratios of the sort considered have been
studied recently by Grundmeier~\cite{Grundmeier} in the context of
group invariant hyperquadric CR mappings.  In particular, Grundmeier
studied the canonically defined group-invariant mappings
from the ball to the hyperquadric.  This problem can be seen as studying
the proportions of positive and negative eigenvalues of group-invariant
polynomials of the form
$p(z,\bar{z}) (\snorm{z}^2-1)$.

The authors would like to acknowledge Peter Ebenfelt, whose question
led to this research.  We would also like to express our gratitude to John P. D'Angelo for many fruitful conversations.  Finally, we thank Iris Lee for her sense of humor.

%%%%%%%%%%%%%%%%%%%%%%%%%%%%%%%%%%%%%%%%%%%%%%%%%%%%%%%%%%%%%%%%%%%%%%%

\section{Preliminaries}

Let $r(z,\bar{z})$ be a real-valued polynomial on $\C^n$.
We use a linear algebra setting.
Suppose $\deg r \leq 2D$.  Let $\sZ =
(1,z_1,\ldots,z_n,z_1^2,z_1z_2,\ldots,z_n^D)^t$ be the vector of
all monomials up to degree $D$, with $\sZ^*$ its conjugate
transpose.  Then
$r(z,\bar{z}) = \sZ^* C \sZ$
where $C$ is a constant Hermitian matrix.  The rank of $r$ is the rank of
$C$, and the signature of $r$ is $(N_+,N_-)$ if and only if
$C$ has $N_+$ positive and $N_-$ negative eigenvalues.  Therefore, when we apply linear algebra terminology to $r$ we are referring to properties of the matrix $C$.

When $r$ is
diagonal, the $f_j$ and $g_j$ appearing in the holomorphic
decomposition \eqref{eq: holo decomp} are monomials.
In this case, questions about $r(z,\bar{z})$ and $r(z,\bar{z}) \norm{z}^{2d}$ for $z \in \C^n$ can be reformulated as questions about polynomials on
$\{\,x\in\R^n:x_k\geq 0\,\}$.  Indeed,
if in
\eqref{eq: holo decomp}, each $f_j$, $g_j$ is a monomial $c_\alpha z^\alpha$ for some multi-index
$\alpha$, then
\begin{equation}
\abs{c_\alpha z^\alpha}^2 = \abs{c_\alpha}^2 \prod_{k=1}^n
(\abs{z_k}^2)^{\alpha_k}.
\end{equation}
If $m_j \colon \R^n \to \mathbb{R}$ is given by $m_j(x):=\abs{c_\alpha}^2
x^\alpha$, then $\abs{f_j(z)}^2 = m_j(\abs{z_1}^2, \ldots,
\abs{z_n}^2)$.  Thus we can study $r$ by studying an associated
real polynomial $p$ on $\{\,x\in\R^n: x_k \geq 0 \,\}$ with $N_+$
positive and $N_-$ negative coefficients.  Observe that
$\norm{z}^2$ is itself a diagonal polynomial and is associated
with $\ell(x):=\sum_{k=1}^n x_k$.

One can therefore formulate the associated problem for real polynomials.
We consider real polynomials $p(x)$ for which
$p(x) \ell(x)^d$
has only nonnegative coefficients.
Such polynomials are nonnegative on
$\{\,x\in\R^n: x_k \geq 0 \,\}$.
Since it is not hard to see how to go from a real polynomial $p(x)$ on $\R^n$
to its Hermitian analogue $r(z,\bar{z})$ on $\C^n$, if we construct
a $p(x)$ with signature $(N_+,N_-)$, we automatically also
construct an $r(z,\bar{z})$ with the same signature $(N_+,N_-)$.

%%%%%%%%%%%%%%%%%%%%%%%%%%%%%%%%%%%%%%%%%%%%%%%%%%%%%%%%%%%%%%%%%%%%%%%

\section{Diagonal case for $d=1$}

In this section we focus on the diagonal case.  The combinatorics
in this special case gives insight into the general case, and furthermore,
we obtain somewhat stronger results.

The polynomials we construct to establish the sense in which our bounds are sharp are all diagonal.
Thus the second part of
Theorem~\ref{thm:thmd1} follows immediately from the last part of the next theorem.

\begin{thm} \label{thm:monthm1}
Suppose $p$ is a polynomial on $\R^n$, $n \geq 2$, and set
$\ell(x):=\sum_{j=1}^n x_j$. Suppose $S(x):=p(x)\ell(x)$ has only
nonnegative coefficients. Let $N_+$ denote the number of 
monomials in $p$ with positive coefficients, and let $N_-$ denote
the number of monomials in $p$ with negative coefficients.
\begin{enumerate}[(i)]
\item If $N_- > 0$, then $N_+ \geq n$.

\item $\frac{N_-}{N_+} < n-1$.

\item For every $\varepsilon >0$, there exists $p$ with
$\frac{N_-}{N_+} > n-1 -\varepsilon$.
\end{enumerate}
\end{thm}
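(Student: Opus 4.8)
The plan is to set up a clean bookkeeping of how multiplication by $\ell$ redistributes coefficients, and then extract inequalities by counting. Write $p = \sum_\alpha p_\alpha x^\alpha$ and $S = p\ell = \sum_\gamma S_\gamma x^\gamma$, so that $S_\gamma = \sum_{k=1}^n p_{\gamma - e_k}$, where $e_k$ is the $k$th standard basis vector and $p_\beta := 0$ if $\beta$ has a negative entry. Let $P$, $M$, $Z$ denote the sets of exponents where $p$ is positive, negative, zero respectively, so $|P| = N_+$, $|M| = N_-$. The hypothesis is that $S_\gamma \geq 0$ for all $\gamma$. The key structural observation is that each $\alpha \in M$ forces, for each coordinate direction $k$, that the ``shifted'' sum $S_{\alpha + e_k} = \sum_j p_{\alpha + e_k - e_j} \geq 0$ must compensate for the negative term $p_\alpha$ sitting inside it (when $j = k$); so for every $\alpha \in M$ and every $k$ there must be some $j \neq k$ with $p_{\alpha + e_k - e_j} > 0$, i.e.\ a positive exponent of $p$ obtained from $\alpha$ by moving one unit of degree from coordinate $j$ to coordinate $k$. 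This is the engine for part (i): fixing $\alpha \in M$ and letting $k$ range over the (at least $n-1$) coordinates where... — more carefully, for each of the $n$ values of $k$ we get a distinct positive exponent $\alpha + e_k - e_{j(k)}$ (distinctness because the exponents differ in which coordinate was incremented, after accounting for the case $\alpha_k = 0$ which simply removes that $k$ from consideration), yielding $N_+ \geq n$ once $N_- > 0$; I would track the edge case where some $\alpha_k = 0$ carefully, as that is where the precise constant $n$ versus $n-1$ is decided.

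For part (ii) the natural approach is a weighting/averaging argument rather than a crude union bound. Introduce the total degree $\deg p = t$; since $\ell$ is homogeneous of degree $1$ and $S$ has nonnegative coefficients, one can work degree by degree — replacing $p$ by its restriction to exponents of a fixed total degree does not change the nonnegativity of the relevant coefficients of $S$, because $\ell$ preserves the grading. So assume $p$ is homogeneous of degree $t$, living on the simplex of exponents $\{\alpha \in \Z_{\geq 0}^n : |\alpha| = t\}$. Now the ``transport'' relation above says: for each $\alpha \in M$ and each admissible direction $k$, charge the negative mass at $\alpha$ to a positive exponent one lattice step away. I would make this quantitative by assigning to each $\alpha \in M$ a weight $1$ and distributing it among the $\geq n-1$ positive neighbors it points to, each receiving weight $\leq \tfrac{1}{n-1}$; then summing, $N_- = \sum_{\alpha\in M} 1 = (n-1)\sum_{\alpha\in M}\sum_k \tfrac{1}{n-1}\cdot(\text{indicator})$, and each positive exponent $\beta$ can be the target of at most ... neighbors — here one must bound how many $\alpha \in M$ can send charge into a fixed $\beta \in P$. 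A fixed $\beta$ has at most $n(n-1)$ lattice neighbors of the form $\beta + e_j - e_k$, but the strict inequality (rather than $\leq$) must come from a boundary effect: not all of these neighbors can simultaneously be in $M$ and all point back to $\beta$, because exponents on the boundary of the simplex (some coordinate zero) have fewer outgoing directions, and a purely ``interior'' configuration would force the support of $p$ to be unbounded, contradicting that $p$ is a polynomial. Making this last point rigorous — that strictness holds, not just $N_-/N_+ \leq n-1$ — is what I expect to be the main obstacle; the cleanest route is probably to look at an extreme exponent (e.g.\ maximal in lexicographic order, or maximal in some coordinate) among those in $M\cup P$ and show it must lie in $P$ and cannot have its full complement of $M$-neighbors, which strictly lowers the ratio below $n-1$.

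For part (iii), since only a construction is needed, I would build explicit diagonal examples approaching the bound. The shape suggested by the counting in (ii) is to take $p$ supported on exponents of a single large total degree $t$, with the positive exponents placed on or near the boundary facets of the simplex $\{|\alpha| = t\}$ and the negative exponents in the interior, so that each interior (negative) exponent is surrounded by positive ones that can absorb it after multiplication by $\ell$. Concretely, a workable family is something like $p = (x_1 + \cdots + x_n)^t - c\sum x^\alpha$ where the sum runs over a carefully chosen set of interior exponents and $c$ is tuned so that $p\ell$ stays nonnegative; as $t \to \infty$ the ratio of interior lattice points to boundary lattice points in the simplex tends to a constant, and one checks that this constant can be pushed up to $n-1-\varepsilon$. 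I would verify nonnegativity of the coefficients of $p\ell$ directly from the transport identity $S_\gamma = \sum_k p_{\gamma - e_k}$: each interior negative term $-c\,x^\alpha$ contributes $-c$ to $S_{\alpha+e_k}$ for each $k$, while the large positive coefficients of $(x_1+\cdots+x_n)^{t+1}$ at those same exponents dominate provided $c$ is bounded by the relevant multinomial coefficient ratio, which stays bounded below away from $0$ as $t$ grows. The bulk of the work here is the asymptotic lattice-point count, which is routine, so I would not grind through it.
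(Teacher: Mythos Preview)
Your treatment of part~(i) is essentially the paper's argument, and your strictness idea for~(ii) (extremal monomial must be positive and has no full set of negative neighbors) matches the paper's. But there are two genuine gaps.

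\medskip

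\textbf{Part (ii): the averaging scheme gives the wrong constant.} As you set it up, each $\alpha\in M$ spreads unit weight over its positive neighbors of the form $\alpha+e_k-e_j$, and you then bound the charge landing on a fixed $\beta\in P$ by noting $\beta$ has at most $n(n-1)$ such neighbors. With each incoming packet of size at most $\tfrac{1}{n-1}$, the bound you get is $N_-\le n\,N_+$, not $(n-1)N_+$. The missing idea is to break the symmetry over $k$: fix one direction, say $k=n$, and for each $\alpha\in M$ use only the single fact that $S_{\alpha+e_n}\ge 0$ forces some $\alpha+e_n-e_j\in P$ with $1\le j\le n-1$. Defining $f\colon M\to P$ by $f(\alpha)=\alpha+e_n-e_{j_0}$ (smallest such $j_0$), the preimages of any $\beta\in P$ lie in the set $\{\beta-e_n+e_j:1\le j\le n-1\}$ of size $n-1$, which gives $N_-\le (n-1)N_+$. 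Your extremal-monomial remark then gives strictness.

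\medskip

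\textbf{Part (iii): the proposed construction cannot work.} You suggest making all interior exponents negative and all boundary exponents positive. But then for any $\gamma$ with every coordinate $\ge 2$, all $n$ contributions $p_{\gamma-e_k}$ to $S_\gamma$ come from the interior and are negative, so $S_\gamma<0$. The concrete family $p=\ell^t - c\sum_{\alpha\in I}x^\alpha$ does not avoid this: the coefficient of $x^\alpha$ in $p$ is $\binom{t}{\alpha}-c$ for interior $\alpha$, so the \emph{negative} coefficients are precisely those with $\binom{t}{\alpha}<c$. Keeping $S\ge 0$ forces $c\le \tfrac{1}{n}\min_{\gamma:\,\gamma_k\ge 2}\binom{t+1}{\gamma}$, which is polynomial in $t$; the $\alpha$ with $\binom{t}{\alpha}<c$ then occupy only a vanishing fraction of the simplex (they sit near the corners), so $N_-/N_+\to 0$, not $n-1$. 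The paper's construction instead places positives on a \emph{sublattice in the interior}: set $\gamma(\alpha)=n-1$ when $\alpha$ lies on the boundary or when $\sum_{k=1}^{n-1}k\alpha_k\equiv D\pmod n$, and $\gamma(\alpha)=-1$ otherwise. The congruence guarantees that among the $n$ exponents $A-e_1,\dots,A-e_n$ exactly one lies on the positive sublattice, so every coefficient of $S$ is $\ge 0$; and since the positive sublattice has density $1/n$ in the interior, one gets $N_-/N_+\to n-1$ as the degree grows.
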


Before proving the theorem, we describe a useful visualization for our
constructions.  Consider homogeneous polynomials in $n=3$
variables.  To avoid subscripts $x,y,z$.
Thus we consider polynomials
$p(x,y,z)$ such that $S(x,y,z)=p(x,y,z) (x+y+z)$
has only nonnegative coefficients.
In Figure~\ref{fig:newton1neg}, we show a diagram for
the polynomial $p(x,y,z) = x^2+y^2+xz - xy$.  We arrange the monomials in a
lattice and mark positive coefficients by a $P$ in a thick circle
and negative coefficients by an $N$ in a thin circle.
In this first diagram, we indicate which monomial each circle
represents, though we refrain from doing so for larger diagrams.
Zero coefficients are marked with dotted circle and do not really come into
play.  We also mark by gray triangles the monomials appearing in the product
$S(x,y,z)=p(x,y,z)(x+y+z)$.  The vertices of each triangle point to monomials of $p$
that contribute to that term of $S$.
We ignore the
magnitude of the coefficients; we are only interested in their signs.
If a term in the product $S$ receives contributions from both
positive and negative terms in $p$, we can increase the positive coefficients so that the sum of the
positive contributions is bigger than the sum of the negative contributions,
thus ensuring that $S$ has only positive coefficients.  For $S$ to have only
positive coefficients, each nonzero term in $S$ must get at least one positive
contribution, and hence each triangle must have one vertex pointing
to a $P$ in the diagram of $p$.
We do not show triangles that receive no contribution from a term in $p$.

\begin{figure}[h!t]
\begin{center}
\includegraphics{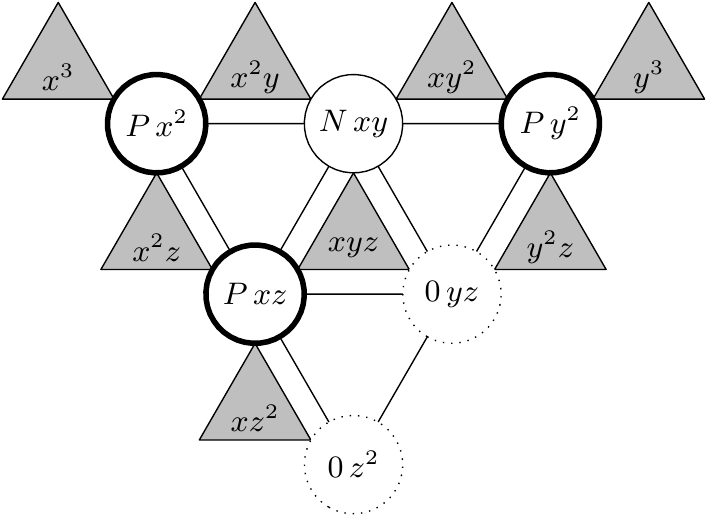}
\caption{A diagram for a second-degree $p(x,y,z)$ with one negative term.}\label{fig:newton1neg}
\end{center}
\end{figure}

Figure~\ref{fig:newton6neg} shows the diagram for a polynomial $p$ with 6 negative coefficients.
\begin{figure}[h!t]
\begin{center}
\includegraphics{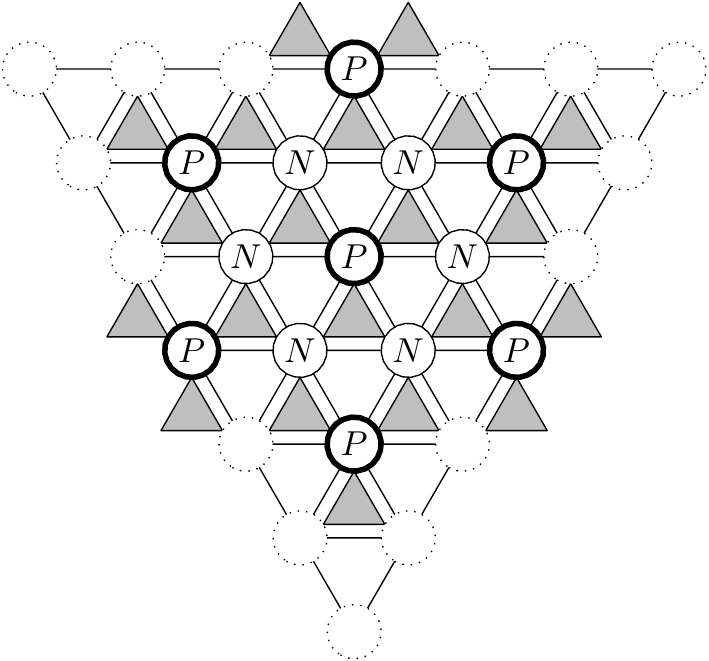}
\caption{Diagram for an example with 6 negative
terms.}\label{fig:newton6neg}
\end{center}
\end{figure}
The key point is that each gray triangle has
at least one vertex pointing to a $P$ in the diagram.  It is not hard
to argue that, if we have 6 negative terms, we must have at least 7 positive
terms. Thus
this figure is in some sense optimal.  An
explicit polynomial having the diagram of Figure~\ref{fig:newton6neg} is
\begin{multline}
p(x,y,z) =
2xyz^4+2x^3z^3+2y^3z^3+2x^2y^2z^2+2x^4yz+2xy^4z+2x^3y^3\\
-x^2yz^3-xy^2z^3-x^3yz^2-xy^3z^2-x^3y^2z-x^2y^3z .
\end{multline}
With 7 positive and only 6 negative coefficients, $N_-/N_+$ is far
from the predicted bound of 2.  Furthermore, this polynomial is already of
degree 6.  To obtain polynomials with ratio close to the bound, we must take
the degree to be
very large, and it is impractical to give diagrams for specific examples.
However, the pattern in Figure~\ref{fig:newton6neg} can be extended to obtain our ``sharp" examples;
the idea is to make
the interior of the diagram as in Figure~\ref{fig:newtontile}.  (We omit the triangles.)

\begin{figure}[h!t]
\begin{center}
\includegraphics{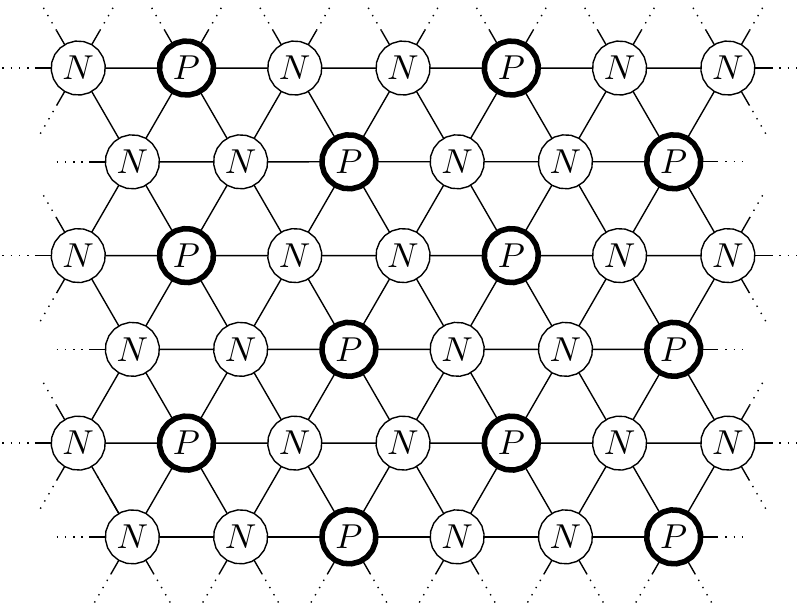}
\caption{Interior of an optimal diagram for $n=3$.}\label{fig:newtontile}
\end{center}
\end{figure}

In order to make the diagram correspond to a polynomial, we
 must make this pattern part of a finite diagram.  We will show that if we take all terms on the boundary to be positive, no negative terms will appear in $S$. Because of these boundary terms, we will have
slightly more than one positive term for every two negative terms.

\begin{proof}[Proof of Theorem~\ref{thm:monthm1}]
For any polynomial $p$ in $n$ variables,
write $p=\sum^D_{j=0} p_j$ where each $p_j$ is homogeneous of
degree $j$. Since we obtain $S$ by multiplying $p$ by
a homogeneous polynomial of degree one, if
$S=\sum_{j=0}^D S_{j}$ with $S_{j}$ homogeneous of degree
$j+1$, $S_{j}$ is simply $p_j \ell$.
One shows easily that, for each statement above, if it holds for each
$p_j$, it holds for $p$.
Thus for the remainder of the proof, we assume all polynomials
are homogeneous and that like terms have been
collected, so that a polynomial is a sum of
distinct monomials.

\paragraph{Proof of (i).} Suppose that in $p$, the coefficient
of $x^{\beta}$ is negative.  This
coefficient contributes to the coefficients of $n$ distinct
terms in $S$ associated with multi-indices $\beta+\bfe^k$, $1 \leq
k \leq n$, where $\bfe^k$ is the vector with 1 in the $k$th position and
zero elsewhere.
For each $k$, there must be a multi-index
$\alpha(k)$ associated with a positive coefficient in $p$ for which
$\alpha(k) + \bfe^j=\beta+ \bfe^k$ for some $j$.  We claim that if $k_1 \ne k_2$, $\alpha(k_1)$ cannot equal $\alpha(k_2)$. Suppose, on the contrary, that there is a single multi-index $\alpha$ different from
$\beta$ and integers $1 \leq j_1, j_2, k_1, k_2\leq n$ with $j_1 \ne
j_2$ and $k_1 \ne k_2$ such that
$\alpha+\bfe^{j_1}=\beta+\bfe^{k_1}$ and $\alpha +
\bfe^{j_2}=\beta+\bfe^{k_2}$. Then
$\bfe^{k_1}-\bfe^{j_1}-\bfe^{k_2}+\bfe^{j_2}=0$.  $\bfe^{j_i}\ne
\bfe^{k_i}$ since $\alpha \ne \beta$, so it must be that
$\bfe^{k_1}=\bfe^{k_2}$ and $\bfe^{j_1}=\bfe^{j_2}$.  This is a
contradiction. We conclude that there are indeed at least $n$
distinct multi-indices $\alpha$ for which the coefficient of $x^\alpha$ in $p$ is positive.

\paragraph{Proof of (ii).}
Let $\sN$ be the set of multi-indices $\alpha$ for which the
coefficient of $x^\alpha$ in $p$ is negative, and let $\sP$ be the
set of all multi-indices for which the coefficient of $x^\alpha$
is positive.  Since $|\sN|=N_-$ and $|\sP|=N_+$, the result will
follow if whenever $\sN$ is nonempty, there exists a function
$f \colon \sN \to \sP$ for which $f^{-1}(\{\beta\})$ has at
most $n-1$ elements for each $\beta \in \sP$.

Consider $\alpha \in \sN$.  The negative coefficient $c_\alpha$ in
$p$ contributes to $n$ terms in $S$, among them the one associated
with $\alpha':=\alpha+\bfe^n$.  The other
multi-indices from $p$ that contribute to this term are
$\alpha'-\bfe^j$ for $1 \leq j \leq n-1$.  In order for the
coefficient of $x^{\alpha'}$ in $S$ to be non-negative, there must
exist $j$ for which $\alpha' -\bfe^{j} \in \sP$. We choose
$j_0$ to be minimal with this property and set
$f(\alpha):=\alpha'-\bfe^{j_0}=\alpha+\bfe^n-\bfe^{j_0}$.

Fix $\beta \in \sP$ and consider $f^{-1}(\{\beta\})$.  If $\alpha$
is such a pre-image, $\beta = f(\alpha)=\alpha+\bfe^n-\bfe^{j} $
for some $j$ between $1$ and $n-1$.  Thus $\alpha$ must be of the
form $\beta - \bfe^n + \bfe^{j}$ for $1 \leq j \leq n-1$, i.e.,
$|f^{-1}(\{\beta\})|\leq n-1$.

Since $p(x) \ell(x)$ has nonnegative coefficients, if we look at the least monomial $x^{\beta_0}$ in $p$ (according to our monomial order)
with nonzero coefficient, we find that
it must be positive because it is the only coefficient of $p$ contributing to the coefficient of $x_1 x^{\beta_0}$ in $S$.
Furthermore, $f^{-1}(\{\beta_0\})$ is empty; such a pre-image would be of the form $\beta_0+\bfe^j-\bfe^n$. Since $x^{\beta_0+\bfe^j-\bfe^n}<x^{\beta_0}$ in the monomial order, $x^{\beta_0+\bfe^j-\bfe^n}$ does not appear in $p$ with non-zero coefficient. This proves that the inequality is in fact strict.

\paragraph{Proof of (iii).}
We construct a family $\{p_D:D\in \bN\}$
of polynomials with $p_D$ homogeneous of degree $D$ such that
$N_-(p_D)/N_+(p_D) \to n-1$ as $D \to \infty$.

For each multi-index $\alpha$, set
\begin{equation}\label{eq: def of coeff.}
\gamma(\alpha):=\begin{cases}n-1 & \text{if $\alpha_k = 0$ for
some $k$}\\
n-1 & \text{if $\alpha_k \geq 1$ for all $k$ and $\sum_{k=1}^{n-1}
k\alpha_k -D \equiv 0 \mod n$}\\
-1 & \text{otherwise}.
\end{cases}
\end{equation}
We then define
\begin{equation}
p_D(x):=\sum_{|\alpha|=D} \gamma(\alpha) x^\alpha.
\end{equation}
We claim that $S_D=p_D \ell$ has only non-negative coefficients.
Consider the term in $S_D$ corresponding to the $n$-tuple
$A=(A_1,A_2,\ldots,A_n)$.  The coefficient of this term is
\begin{equation}
c(A):=\sum_{k=1}^n \gamma(A-\bfe^k),
\end{equation}
where we take $\gamma(A-\bfe^k)$ to be 0 if $A_k=0$.  Since all
negative coefficients are equal to $-1$ and all positive
coefficients are equal to $n-1$, to show that $c(A)\geq 0$, it
suffices to show that if there exists $k_1$ such that
$\gamma(A-\bfe^{k_1})=-1$, there exists $k_2 \ne k_1$ such that
$\gamma(A-\bfe^{k_2})=n-1$.

If $\gamma(A-\bfe^{k_1})=-1$, then by our
definition of $\gamma(\alpha)$, $A_{k_1}-1\geq 1$ and, for all $k
\ne k_1$, $A_k \geq 1$.  Thus all $n$ of the numbers
$\gamma(A-\bfe^k)$ are non-zero.  We consider two cases.

In the first case, suppose there exists $k \ne k_1$ such that
$A_k-1=0$.  Then $\gamma(A-\bfe^k)=n-1$ and
$c(A)$ is indeed non-negative.

In the second case, for all $k$, $A_k - 1 >0$. For each $k$, we
consider
\begin{equation}\label{eq: determining gamma}
\sum_{j=1}^{n-1} j(A-\bfe^k)_j-D=
\sum_{j=1}^{n-1}j A_j- D - k.
\end{equation}
Since $k$ ranges over $\{1,2,\ldots,n\}$, the $n$ numbers in
\eqref{eq: determining gamma} are consecutive and thus range over
all congruence classes modulo $n$.  Therefore there exists a $k_2$
for which $\sum j A_j - D - k_2 \equiv 0 \mod n$, so that
$\gamma(A-\bfe^{k_2})=n-1$.  Thus in this case as well, $c(A)$ is non-negative.

Now consider $N_-(p_D)/N_+(p_D)$.  By (ii), this is bounded
above by $n-1$.  Thus (iii) will follow if we show that this ratio
is bounded below by a function of $n$ and $D$ that tends to $n-1$
as $D$ tends to infinity.

Since we will let $D \to \infty$, but $n$ is fixed, we may assume without loss of generality that $D>3n$.  As above, write $S_D(x)=\sum_{|A|=D+1} c(A) x^A$.  Define a subset of multi-indices $A$ of length $D+1$ by
\begin{equation}
I(S_D):=\{\,A: |A| = D+1 \;\text{and}\; A_k \geq 2 \quad \text{for all $k$}\,\}.
\end{equation}
These ``interior" multi-indices are those for which no zero appears in a multi-index associated with a term in $p_D$ contributing to $c(A)$. Thus exactly $n$ non-zero coefficients from $p_D$ contribute to $c(A)$, with precisely $n-1$ of them negative.  Since each negative coefficient in $p_D$ contributes to at most $n$ terms in $S_D$,
\begin{equation}\label{eq: first ineq. involving N-}
nN_-(p_D)\geq (n-1)|I(S_D)|.
\end{equation}
To determine the size of $I(S_D)$, consider the function on $I(S_D)$:
\begin{equation}
g(A):=(A_1-2,A_2-2,\ldots,A_n-2).
\end{equation}
One checks that $g$ is a bijection between $I(S_D)$ and
$\{\,B=(B_1,\ldots,B_n): B_k \geq 0 \;\text{and}\; |B|=D-2n+1\,\}$.
Since $|\{B:B_k \geq 0 \;\text{and}\; |B|=D+1-2n\}|$ equals the number of monomials of degree $D+1-2n$ in $n$ variables,
\begin{equation}
|I(S_D)|=|\{B:B_k\geq 0 \;\text{and}\; |B|=D+1-2n\}|=\binom{D-n}{n-1}.
\end{equation}
Combining with \eqref{eq: first ineq. involving N-} gives
\begin{equation}
N_-(p_D)\geq \frac{n-1}{n} \binom{D-n}{n-1}.
\end{equation}
Since $p_D$ has a non-zero coefficient for every monomial of degree $D$ in $n$ variables,
\begin{equation}
\begin{split}
\frac{N_-(p_D)}{N_+(p_D)} & = \frac{N_-(p_D)}{\binom{D+n-1}{n-1} - N_-(p_D)}\\
& \geq \frac{\frac{n-1}{n}\binom{D-n}{n-1}}{\binom{D+n-1}{n-1} -
\frac{n-1}{n}\binom{D-n}{n-1}}.
\end{split}
\end{equation}
Since $n$ is fixed and we will take a limit as $D \to \infty$, we need only determine the leading-order term in the numerator and the denominator of the last expression.
The numerator is a polynomial in $D$ of degree $n-1$ with leading coefficient $\frac{n-1}{n}\cdot \frac{1}{(n-1)!}$,
whereas the denominator is a polynomial in $D$ of degree $n-1$ with leading coefficient $\frac{1}{(n-1)!}-\frac{n-1}{n}\cdot\frac{1}{(n-1)!}$.  Thus
\begin{equation}
\lim_{D \to \infty}
\frac{\frac{n-1}{n}\binom{D-n}{n-1}}{\binom{D+n-1}{n-1}-\frac{n-1}{n}
\binom{D-n}{n-1}}= \frac{\frac{n-1}{n} \cdot \frac{1}{(n-1)!}}{\frac{1}{(n-1)!} - \frac{n-1}{n}\cdot \frac{1}{(n-1)!}}=n-1.
\end{equation}
This completes the proof of (iii) and of the theorem.
\end{proof}

%%%%%%%%%%%%%%%%%%%%%%%%%%%%%%%%%%%%%%%%%%%%%%%%%%%%%%%%%%%%%%%%%%%%%%%

\section{The general case for $d=1$}

If it were possible to replace an arbitrary $r$ for which
$r(z,\bar{z})\norm{z}^2$ is a squared norm with a diagonal $\tilde{r}$ of
the same signature for which $\tilde{r}(z,\bar{z})\norm{z}^2$ is a squared
norm, the results of the previous section would imply the general results.
Although it appears that such a reduction to the diagonal case is not
possible, we show that it is possible to replace an $r$ as above with an
$\tilde{r}$ with
\begin{equation}
\tilde{r}(z,\bar{z})
=
\left\langle
\left[\begin{smallmatrix} I & 0\\0&-I \end{smallmatrix}\right]
\left[\begin{smallmatrix}\tilde{A}\\ \tilde{B} \end{smallmatrix} \right]\sZ ,
\left[\begin{smallmatrix}\tilde{A}\\ \tilde{B} \end{smallmatrix} \right]\sZ
\right\rangle
\end{equation}
with the same signature as $r$, but with $\left[\begin{smallmatrix}\tilde{A}\\\tilde{B} \end{smallmatrix} \right]$ in a partial row-echelon form.

We first establish an elementary proposition.
\begin{prop} \label{prop:lambdanorm}
If
\begin{equation}
\bigl(
\snorm{f(z)}^2-\snorm{g(z)}^2
\bigr)
\snorm{z}^2
\end{equation}
is a squared norm, then for every $\lambda \in [0,1]$
\begin{equation}
\bigl(
\snorm{f(z)}^2-\lambda\snorm{g(z)}^2
\bigr)
\snorm{z}^2
\end{equation}
is also a squared norm.
\end{prop}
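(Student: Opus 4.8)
The plan is to write $\bigl(\snorm{f(z)}^2-\lambda\snorm{g(z)}^2\bigr)\snorm{z}^2$ as a sum of two squared norms, using only the elementary closure properties of the class of squared norms: it is closed under addition (concatenate the holomorphic mappings), under multiplication (if $\sum_i\abs{a_i(z)}^2$ and $\sum_k\abs{b_k(z)}^2$ are squared norms, so is $\sum_{i,k}\abs{a_i(z)b_k(z)}^2$), and under multiplication by a nonnegative scalar $c$ (replace each $h_j$ by $\sqrt{c}\,h_j$). The one idea needed is the convex-combination identity
\begin{equation}
\snorm{f(z)}^2-\lambda\snorm{g(z)}^2 = \lambda\bigl(\snorm{f(z)}^2-\snorm{g(z)}^2\bigr) + (1-\lambda)\snorm{f(z)}^2,
\end{equation}
whose coefficients $\lambda$ and $1-\lambda$ are both nonnegative precisely because $\lambda\in[0,1]$.

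Multiplying this identity by $\snorm{z}^2$ yields
\begin{equation}
\bigl(\snorm{f(z)}^2-\lambda\snorm{g(z)}^2\bigr)\snorm{z}^2 = \lambda\bigl(\snorm{f(z)}^2-\snorm{g(z)}^2\bigr)\snorm{z}^2 + (1-\lambda)\,\snorm{f(z)}^2\snorm{z}^2 .
\end{equation}
I would then observe that the first summand on the right is $\lambda\geq 0$ times $\bigl(\snorm{f(z)}^2-\snorm{g(z)}^2\bigr)\snorm{z}^2$, which is a squared norm by hypothesis, and the second summand is $1-\lambda\geq 0$ times the product $\snorm{f(z)}^2\snorm{z}^2$ of two squared norms, hence also a squared norm. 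A sum of two squared norms is a squared norm, so the left-hand side is a squared norm, as desired.

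There is no genuine obstacle here; the argument is a short formal manipulation, and the real content of the proposition lies entirely in the convex-combination trick rather than in any estimate. The only point worth flagging is that the hypothesis $\lambda\in[0,1]$ is used essentially, to keep both scalar coefficients nonnegative, and that the conclusion genuinely fails for $\lambda$ outside this range, since $\snorm{f(z)}^2-\lambda\snorm{g(z)}^2$ need not even be nonnegative then.
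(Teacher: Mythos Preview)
Your proof is correct and follows essentially the same approach as the paper: write the target as a sum of two terms, one coming from the hypothesis and the other a manifest squared norm. The only cosmetic difference is the algebraic identity chosen: the paper uses
\[
\bigl(\snorm{f}^2-\lambda\snorm{g}^2\bigr)\snorm{z}^2=\bigl(\snorm{f}^2-\snorm{g}^2\bigr)\snorm{z}^2+(1-\lambda)\snorm{g}^2\snorm{z}^2,
\]
adding back $(1-\lambda)\snorm{g}^2\snorm{z}^2$, whereas you take the convex combination $\lambda\bigl(\snorm{f}^2-\snorm{g}^2\bigr)+(1-\lambda)\snorm{f}^2$; both are equally short and neither buys anything over the other.
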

\begin{proof}
For any $\lambda \in [0,1]$,
\begin{equation}
\begin{split}
\left(\snorm{f(z)}^2 - \lambda\snorm{g(z)}^2 \right)\snorm{z}^2 &=
\left(\snorm{f(z)}^2 -\snorm{g(z)}^2 \right)\snorm{z}^2 +
(1-\lambda)\snorm{g(z)}^2\snorm{z}^2 \\
&= \left(\snorm{f(z)}^2 - \snorm{g(z)}^2 \right)\snorm{z}^2 +
\snorm{\sqrt{1-\lambda} \, g \otimes z}^2.
\end{split}
\end{equation}
Since a sum of squared norms is itself a squared norm, the claim
holds.
\end{proof}

The next lemma is of critical importance.
\begin{lemma} \label{lemma:normform}
Suppose $r(z,\bar{z}) = \left\langle
\left[\begin{smallmatrix}I_{N_+} & 0\\
0&-I_{N_-}\end{smallmatrix}\right]
\left[\begin{smallmatrix}A\\
B\end{smallmatrix}\right]
\sZ ,
\left[\begin{smallmatrix}A\\
B\end{smallmatrix}\right] \sZ \right\rangle$ has signature pair
$(N_+,N_-)$ (so that $A$ and $B$ have rank $N_+$ and $N_-$, resp.), and
suppose that $r(z,\bar{z})\norm{z}^2$ is a squared norm.  Then
there exists $\tilde{r}(z,\bar{z}) = \left\langle
\left[\begin{smallmatrix}I_{N_+} & 0\\
0&-I_{N_-}\end{smallmatrix}\right]
\left[\begin{smallmatrix}\tilde{A}\\
\tilde{B}\end{smallmatrix}\right] \sZ ,
\left[\begin{smallmatrix}\tilde{A}\\
\tilde{B}\end{smallmatrix}\right] \sZ \right\rangle$ with the same
signature pair as $r$ such that $\tilde{r}(z,\bar{z})\norm{z}^2$
is also a squared norm and the matrix
$\left[\begin{smallmatrix}\tilde{A}\\
\tilde{B}\end{smallmatrix}\right]$ is in row-echelon form up to
permutation of rows. We will say that such a matrix is in \emph{partial
row-echelon form}.
\end{lemma}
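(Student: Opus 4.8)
The plan is to pass from $r$ to $\tilde r$ by a finite sequence of invertible row operations $\left[\begin{smallmatrix}A\\ B\end{smallmatrix}\right]\mapsto G\left[\begin{smallmatrix}A\\ B\end{smallmatrix}\right]$, each of which keeps the signature equal to $(N_+,N_-)$ and keeps the product with $\norm{z}^2$ a squared norm, chosen so that they eventually put the matrix into partial row-echelon form. Writing $J:=\left[\begin{smallmatrix}I_{N_+}&0\\0&-I_{N_-}\end{smallmatrix}\right]$, there are two admissible kinds of $G$. First, \emph{block scalings} $G=\left[\begin{smallmatrix}P&0\\0&Q\end{smallmatrix}\right]$ with $P,Q$ invertible, $P^*P\succeq I$, and $Q^*Q\preceq I$: then the new polynomial is $r(z,\bar z)+\snorm{(P^*P-I)^{1/2}A\sZ}^2+\snorm{(I-Q^*Q)^{1/2}B\sZ}^2$, so its product with $\norm{z}^2$ is a sum of squared norms, and invertibility of $P,Q$ together with the fact that the row spaces of $A$ and $B$ meet only at $0$ (which is what having signature pair $(N_+,N_-)$ forces, since then $A^*A-B^*B$ has rank $N_++N_-$) keeps the signature $(N_+,N_-)$. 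Second, \emph{hyperbolic rotations} $G\in U(N_+,N_-)$, i.e.\ $G^*JG=J$, which leave the polynomial literally unchanged; I only need those acting on one row of $A$ and one row of $B$ through a $2\times2$ block $\left[\begin{smallmatrix}\cosh\theta&e^{i\phi}\sinh\theta\\ e^{-i\phi}\sinh\theta&\cosh\theta\end{smallmatrix}\right]$. Note that Proposition~\ref{prop:lambdanorm} is the special case $P=I$, $Q=\sqrt{\lambda}\,I$, and that taking $P=cT$ with $c$ large (large enough that $c^2T^*T\succeq I$), resp.\ $Q=cT$ with $c$ small, lets me left-multiply $A$, resp.\ $B$, by an arbitrary invertible matrix $T$.

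Given these moves, the construction is a left-to-right column sweep. First, using block scalings with $T$ chosen to be Gaussian elimination matrices (suitably scaled), put $A$ and $B$ each into row-echelon form up to a permutation of rows. The only remaining obstruction to $\left[\begin{smallmatrix}A\\ B\end{smallmatrix}\right]$ being in partial row-echelon form is then that some column $j$ is the pivot column of a row $a$ of $A$ and also of a row $b$ of $B$; take $j$ minimal. Since $a$ and $b$ vanish in every column $<j$, shrink $B$ by a small scalar $\sqrt{\lambda}$ (Proposition~\ref{prop:lambdanorm}), which moves no pivots, so that $\sabs{b_j}<\sabs{a_j}$, and then apply the hyperbolic rotation on the pair $(a,b)$ that annihilates the $j$th entry of the new $b$. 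Because $U(1,1)$ preserves $\sabs{a_j}^2-\sabs{b_j}^2$ coordinatewise, the new $a$ still has nonzero $j$th entry and keeps its pivot at $j$, while the pivot of $b$ moves strictly to the right; no other row, and no column before $j$, is affected. If the displaced $B$-pivot now collides with another row of $B$, re-reduce $B$ by a further block scaling, which only pushes $B$-pivots rightward. Thus the minimal collision column strictly increases at each iteration, so after finitely many steps the matrix is in partial row-echelon form; the resulting blocks are the desired $\tilde A$ and $\tilde B$.

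The step I expect to be the main obstacle is the collision of an $A$-pivot with a $B$-pivot: no within-block operation can separate the two rows (an example with $N_++N_-=2$ shows this), so one is forced into a genuinely indefinite move in $U(N_+,N_-)$, and a hyperbolic rotation annihilates an entry only under the strict inequality $\sabs{b_j}<\sabs{a_j}$ — which is precisely why one first needs Proposition~\ref{prop:lambdanorm} to shrink $B$. The remainder is bookkeeping: verifying that each move stays in the admissible class, that it leaves already-cleared columns and the ranks of the two blocks intact, and that the sweep terminates.
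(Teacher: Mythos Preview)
Your argument is correct and matches the paper's strategy: reduce $A$ and $B$ separately to echelon form, then eliminate $A$--$B$ pivot collisions one column at a time by first shrinking $B$ via Proposition~\ref{prop:lambdanorm} and then applying a $2\times 2$ hyperbolic rotation in $U(1,1)$. The only cosmetic difference is that the paper performs the initial within-block echelon reduction with block-diagonal unitaries rather than your more general block scalings, but the crux---the indefinite rotation preceded by the $\lambda$-shrink---is identical.
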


\begin{proof}
For clarity, we suppress the subscripts on our identity matrices and write
$r(z,\bar{z}) = \left\langle
\left[\begin{smallmatrix}I & 0\\
0&-I\end{smallmatrix}\right]
\left[\begin{smallmatrix}A\\
B\end{smallmatrix}\right]
\sZ ,
\left[\begin{smallmatrix}A\\
B\end{smallmatrix}\right]
\sZ \right\rangle$.
Because unitary matrices of the form
$\left[\begin{smallmatrix}U_1 & 0\\
0&U_2\end{smallmatrix}\right]$ (with $U_1$ and $U_2$ unitary and
of dimension $N_+ \times N_+$ and $N_- \times N_-$, resp.) commute with
$\left[\begin{smallmatrix}I & 0\\0&-I
\end{smallmatrix}\right]$, we may write
\begin{equation}
\begin{split}
r(z,\bar{z}) & =
\left\langle
{\left[\begin{smallmatrix}U_1 & 0\\
0&U_2\end{smallmatrix}\right]}^*
\left[\begin{smallmatrix}U_1 & 0\\
0&U_2\end{smallmatrix}\right]
\left[\begin{smallmatrix}I & 0\\
0&-I\end{smallmatrix}\right]
\left[\begin{smallmatrix}A\\
B\end{smallmatrix}\right]
\sZ ,
\left[\begin{smallmatrix}A\\
B\end{smallmatrix}\right]
\sZ \right\rangle
\\
& =
\left\langle
\left[\begin{smallmatrix}U_1 & 0\\
0&U_2\end{smallmatrix}\right]
\left[\begin{smallmatrix}I & 0\\
0&-I\end{smallmatrix}\right]
\left[\begin{smallmatrix}A\\
B\end{smallmatrix}\right]
\sZ ,
\left[\begin{smallmatrix}U_1 & 0\\
0&U_2\end{smallmatrix}\right]
\left[\begin{smallmatrix}A\\
B\end{smallmatrix}\right]
\sZ \right\rangle
\\
& =
\left\langle
\left[\begin{smallmatrix}I & 0\\
0&-I\end{smallmatrix}\right]
\left[\begin{smallmatrix}U_1A\\
U_2 B\end{smallmatrix}\right]
\sZ ,
\left[\begin{smallmatrix}U_1 A\\
U_2 B\end{smallmatrix}\right]
\sZ \right\rangle
.
\end{split}
\end{equation}

By choosing the $U_i$ appropriately,  we put $A$ and $B$ individually into row-echelon form. We do {\it not} achieve
a {\it reduced} row-echelon form. We may not be able to eliminate
non-zero entries above the pivots, and our pivots need not be 1s.
Note that the matrix
$C:=\left[\begin{smallmatrix}A\\B
\end{smallmatrix}\right]$ need not be in row-echelon form, even
after permuting the rows.

What kinds of transformations can we apply to $C$ to reduce it further? Let $T$ be an $(N_+ + N_-)\times
(N_+ + N_-)$ matrix.  Then $\langle I' T C \sZ, T C \sZ \rangle =
\langle I' C \sZ, C\sZ \rangle $ if and only if $T^* I' T=I'$. Consider the leftmost column of $C$.  If it does not have a pivot
of either $A$ or $B$, we set it aside.  If
it has a pivot of $A$ or a pivot of $B$, but not both, we again put
the column aside.  The row containing the pivot may now also be
set aside. If we never reach a column with both a pivot
of $A$ and a pivot of $B$, then $C$ is already in the desired
form.

Suppose, then, that we reach a column containing both a pivot
of $A$ and a pivot of $B$.  Consider the two rows containing the
pivots.  Both contain only zeros to the left of the
pivot.  We represent these two rows by the $2 \times 2$ matrix
\begin{equation}
\begin{bmatrix}a_1 & a_2\\ b_1 & b_2 \end{bmatrix},
\end{equation}
where $a_1$ and $b_1$ are non-zero complex numbers  and $a_2$ and
$b_2$ are row vectors containing the rest of the entries of the
two rows under consideration. Thus in order to find a
transformation $T$ so that $r(z,\bar{z})=\langle I' T C \sZ , T C
\sZ \rangle $ and $T C$ has a single pivot in this column,
appearing in the position formerly occupied by $a_1$, it suffices
to find a $2 \times 2$ matrix $T$ such that $T^*
\left[\begin{smallmatrix}1&0\\0&-1
\end{smallmatrix}\right] T =\left[\begin{smallmatrix}1&0\\0&-1
\end{smallmatrix}\right]$ and $T
\left[\begin{smallmatrix}a_1&a_2\\b_1&b_2
\end{smallmatrix}\right] =
\left[\begin{smallmatrix}a_1'&a_2'\\0&b_2'
\end{smallmatrix}\right]$.

If $T=[t_{ij}]$, the first of these requirements yields
\begin{gather}
\abs{t_{11}}^2-\abs{t_{21}}^2=1\\
\bar{t}_{11}t_{12}-\bar{t}_{21}t_{22}=0\\
\abs{t_{22}}^2-\abs{t_{12}}^2=1.
\end{gather}
In order for the second to be satisfied, we require
\begin{equation}
b_1' = t_{21}a_1+t_{22}b_1= 0.
\end{equation}
Thus we need $t_{21}=-t_{22} \frac{b_1}{a_1}$.  An elementary
calculation shows that $T$ is necessarily of the form
$\left[\begin{smallmatrix}e^{i\theta}t_{22}
& -e^{i\theta}t_{22}\overline{\left(\frac{b_1}{a_1}\right)}\\ -t_{22}\frac{b_1}{a_1} & t_{22}
\end{smallmatrix}\right]$
where $t_{22}$ also satisfies
\begin{equation}
|t_{22}|^2\left(1- \abs{\frac{b_1}{a_1}}^2\right)=1.
\end{equation}
Thus for this $r$, if $\abs{a_1}>\abs{b_1}$, it is possible to replace $C$
with a matrix $C'$ of the same rank in which $a_1'$ is non-zero, but $b_1'=0$.

The only situation left to consider is when $\abs{a_1}\leq \abs{b_1}$.  In
this case we modify $r$.  Since $r(z,\bar{z})=
\norm{A\sZ}^2-\norm{B\sZ}^2$ and $r(z,\bar{z})\snorm{z}^2$ is a
squared norm, by Proposition \ref{prop:lambdanorm}, for any
$\lambda \in [0,1]$, $\left(\norm{A\sZ}^2 -\lambda \norm{B\sZ}^2
\right)\norm{z}^2$ is a squared norm, and
$\tilde{r}(z,\bar{z}):=\norm{A\sZ}^2 -\lambda \norm{B\sZ}^2$ and
$r$ have the same signature pair.  Observe,
\begin{equation}
\begin{split}
\tilde{r}(z,\bar{z})&= \snorm{A\sZ}^2 -
\snorm{\sqrt{\lambda}\,B\sZ}^2\\
&= \left\langle I' \left[\begin{smallmatrix}A\\ \sqrt{\lambda}B
\end{smallmatrix}\right]\sZ, \left[\begin{smallmatrix}A \\ \sqrt{\lambda}B
\end{smallmatrix}\right]\sZ \right\rangle.
\end{split}
\end{equation}
Thus if both $a_1$ and $b_1$ are non-zero, but $|a_1| \leq |b_1|$,
through an appropriate choice of $\lambda$, we can replace $r$
with an $\tilde{r}$ having the same signature pair as $r$ with
matrix $\tilde{C}$ having non-zero entries in precisely the same
positions as in $C$, but with the property that $|a_1|>
\sqrt{\lambda}|b_1|$.  We may thus now apply a transformation $T$
as above to achieve the desired reduction of the matrix
$\tilde{C}$. Continuing in this manner, we eventually obtain an
$\tilde{r}$ with the same signature as the original $r$, but with
the matrix
$\left[\begin{smallmatrix}\tilde{A}\\ \tilde{B}\end{smallmatrix} \right]$
in partial row-echelon
form.
\end{proof}

We can now prove the first part of Theorem~\ref{thm:thmd1}.

\begin{lemma} \label{lemma:thmd1part1}
Let $r(z,\bar{z})$ be a real polynomial on $\C^n$, $n \geq 2$,
and
suppose that $r(z,\bar{z}) \norm{z}^2$ is a squared norm.  Let $(N_+,N_-)$
be the signature pair of $r$.  Then
\begin{equation}
\frac{N_-}{N_+} < n-1.
\end{equation}
\end{lemma}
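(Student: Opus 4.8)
The plan is to reduce to the normal form supplied by Lemma~\ref{lemma:normform} and then reproduce, at the level of leading monomials, the combinatorial argument of the diagonal case (the proof of Theorem~\ref{thm:monthm1}(ii)). By Lemma~\ref{lemma:normform} I may replace $r$ by an $\tilde r$ with the same signature pair $(N_+,N_-)$, with $\tilde r(z,\bar z)\norm{z}^2$ still a squared norm and with $\left[\begin{smallmatrix}\tilde A\\ \tilde B\end{smallmatrix}\right]$ in partial row-echelon form; since the proof of that lemma does not use the ordering of the columns of $\sZ$, I am free to order those columns by a fixed monomial order $\prec$ with $z_1\prec z_2\prec\cdots\prec z_n$. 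Let $f_1,\dots,f_{N_+}$ be the entries of $\tilde A\sZ$ and $g_1,\dots,g_{N_-}$ the entries of $\tilde B\sZ$, and for a polynomial $h$ write $[h]_\gamma$ for the coefficient of $z^\gamma$. The partial row-echelon form says precisely that the $\prec$-least monomials $z^{\alpha(1)},\dots,z^{\alpha(N_+)}$ of the $f_i$ and $z^{\beta(1)},\dots,z^{\beta(N_-)}$ of the $g_j$ are $N_++N_-$ pairwise distinct monomials, with nonzero pivot coefficients $[f_i]_{\alpha(i)}$, $[g_j]_{\beta(j)}$. This distinctness is what plays the role of ``the monomials of $p$ are distinct'' in the diagonal argument.

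Set $\sP=\{\alpha(1),\dots,\alpha(N_+)\}$ and $\sN=\{\beta(1),\dots,\beta(N_-)\}$, and try to build a map $\phi\colon\sN\to\sP$ with $\abs{\phi^{-1}(\{\beta\})}\le n-1$ for each $\beta\in\sP$, exactly as in Theorem~\ref{thm:monthm1}(ii). Given $\beta=\beta(j)$, consider the coefficient of $z^{\beta+\bfe^n}\bar z^{\beta+\bfe^n}$ in $\tilde r(z,\bar z)\norm{z}^2$. Since $\tilde r(z,\bar z)\norm{z}^2$ is a squared norm, its Hermitian matrix is positive semidefinite, so this diagonal coefficient is nonnegative; writing $\tilde r(z,\bar z)\norm{z}^2=\sum_{i,k}\abs{z_kf_i}^2-\sum_{j',k}\abs{z_kg_{j'}}^2$, it equals $\sum_{i,k}\abs{[f_i]_{\beta+\bfe^n-\bfe^k}}^2-\sum_{j',k}\abs{[g_{j'}]_{\beta+\bfe^n-\bfe^k}}^2$, and this contains the strictly negative term $-\abs{[g_j]_\beta}^2$ (from $j'=j$, $k=n$). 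The key step is to deduce from nonnegativity that one of the compensating positive contributions comes from a \emph{leading} term, i.e.\ that $\beta+\bfe^n-\bfe^k=\alpha(i)$ for some $i$ and some $k\in\{1,\dots,n-1\}$; one then sets $\phi(j):=i$. Granting this, $\phi(j)=i$ forces $\beta(j)=\alpha(i)+\bfe^k-\bfe^n$ with $1\le k\le n-1$, so the fibers have at most $n-1$ elements and $N_-\le(n-1)N_+$. Strictness follows as in Theorem~\ref{thm:monthm1}: if $z^{\mu_0}$ is the $\prec$-least element of $\sP\cup\sN$, then positive semidefiniteness applied to the coefficient of $z^{\mu_0+\bfe^1}\bar z^{\mu_0+\bfe^1}$ (whose only nonzero contribution is the positive $\abs{[f_{i_0}]_{\mu_0}}^2$, since every monomial $\mu_0+\bfe^1-\bfe^k$ with $k>1$ is $\prec$-smaller than all pivots) forces $\mu_0\in\sP$, and $\mu_0$ can never be a value of $\phi$ because a preimage would be a pivot monomial $\prec z^{\mu_0}$. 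Hence $N_-<(n-1)N_+$.

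The step flagged above is where I expect the real difficulty. In the diagonal case it is automatic: the only monomials present are the pivots, so any positive contribution \emph{is} a pivot of some $f_i$, and it uses a variable $\ne z_n$ because the $z_n$-contribution $z^{\beta}$ is the negative pivot itself. In general two new phenomena intervene: several of the $g_{j'}$, not only $g_j$, may carry $z^{\beta+\bfe^n-\bfe^k}$ in their non-leading terms and add further negative contributions, and the compensating positive coefficients $[f_i]_{\beta+\bfe^n-\bfe^k}$ need not be pivot coefficients. This is precisely the obstruction that blocks a naive reduction to the diagonal case. I would try to overcome it by using more than the nonnegativity of one coefficient: the full positive semidefiniteness of the Hermitian matrix of $\tilde r(z,\bar z)\norm{z}^2$ (in particular, that a vanishing diagonal entry forces the whole corresponding row and column to vanish), together with the staircase shape of $\left[\begin{smallmatrix}\tilde A\\ \tilde B\end{smallmatrix}\right]$, by processing the monomials $z^{\beta(j)}$ in increasing $\prec$-order and tracking which tails of the $g_{j'}$ can interfere, so that at each stage the interfering terms are already accounted for. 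This inductive bookkeeping is where the work concentrates; once it is in place, the fiber count and the strictness argument are transcriptions of the diagonal proof.
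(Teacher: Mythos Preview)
Your proposal correctly identifies the crux, and you are right that the flagged step is a genuine gap: from the nonnegativity of the single diagonal entry at $\beta+\bfe^n$ you cannot conclude that some compensating $[f_i]_{\beta+\bfe^n-\bfe^k}$ is a \emph{pivot} coefficient. Nothing in the partial row-echelon form prevents all of the positive mass at $\beta+\bfe^n-\bfe^k$ (for $k<n$) from sitting in tails of the $f_i$, and your suggested ``inductive bookkeeping in $\prec$-order'' does not obviously control this: processing $\beta(j)$'s in increasing order does not bound which \emph{later} tail-entries of the $f_i$ may appear at $\beta+\bfe^n-\bfe^k$, since those multi-indices lie above $\beta$ in the order. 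In short, the map $\phi\colon\sN\to\sP$ need not exist, and the sketch does not supply a mechanism that forces it to.

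The paper avoids this difficulty by abandoning the attempt to match individual pivots and instead works globally with the stacked matrices. Writing $(C\sZ_k)z_j=C_j\sZ_{k+1}$ and stacking the $C_j$ into $\tilde C$, one separates the $nN_+$ ``positive'' rows into $P$ and the $nN_-$ ``negative'' rows into $Q$. Positive semidefiniteness now gives not merely nonnegative diagonal entries but the much stronger inclusion $\text{rowspace}(Q)\subseteq\text{rowspace}(P)$. The counting is then purely about leading-term \emph{columns} of $P$ and $Q$: letting $m_+$ be the number of columns containing a leading entry of some row of $P$ and $h$ the number of additional columns carrying only $Q$-leading entries, the rowspace inclusion forces $\operatorname{rank}P\ge m_++h$, hence $h\le e_+:=nN_+-m_+$. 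Since at most $n$ rows can share a leading-term column (and the leftmost such column has a single $P$-entry), one gets $nN_-\le nh+(n-1)(m_+-1)-e_+<(n-1)nN_+$. This argument never asks whether a particular coefficient is a pivot; it only uses rank and the pigeonhole bound of $n$ leading entries per column, which is exactly the structural consequence of multiplying by $\norm{z}^2$. That global linear-algebra step is the missing idea in your approach.
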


\begin{proof}
Let $\sZ_k$ denote the vector of all
holomorphic monomials in $n$ variables of degree at most $k$.
Order the monomials as above, and note that multiplication by
$z_j$ preserves the order. In light of Lemma~\ref{lemma:normform},
we may assume $r(z,\bar{z}) = \langle I' C \sZ_k, C \sZ_k \rangle$
where $C$ is in partial row-echelon form and $I'$ is the diagonal matrix
$\left[\begin{smallmatrix}I&0\\0&-I \end{smallmatrix}\right]$ with
signature $(N_+,N_-)$.

Let $C_j$ be the matrix defined by
\begin{equation}
\bigl( C \sZ_k \bigl) z_j = C_j \sZ_{k+1} .
\end{equation}
Because $C$ is in partial row-echelon form, $C_j$ is as well. Then
\begin{equation}
\begin{split}
r(z,\bar{z}) \norm{z}^2 & = \sum_{j=1}^n \abs{z_j}^2 \langle I' C
\sZ_k, C \sZ_k \rangle
\\
& = \sum_{j=1}^n \langle I' C_j \sZ_{k+1}, C_j \sZ_{k+1} \rangle
\\
& =
\left\langle
\begin{bmatrix}
I' & 0 & \cdots & 0 \\
0 & I' & \cdots & 0 \\
\vdots & \vdots & \ddots & \vdots \\
0 & 0 & \cdots & I'
\end{bmatrix}
\begin{bmatrix}
C_1 \\
C_2 \\
\vdots \\
C_n
\end{bmatrix}
\sZ_{k+1}
,
\begin{bmatrix}
C_1 \\
C_2 \\
\vdots \\
C_n
\end{bmatrix}
\sZ_{k+1} \right\rangle = \langle \tilde{I'} \tilde{C} \sZ_{k+1},
\tilde{C} \sZ_{k+1} \rangle .
\end{split}
\end{equation}
The matrix $\tilde{C}$ is not in partial row-echelon form and is not
even of full rank.  It is, however, in a special form that we can
exploit.
Although several rows may have their
leading term in the same column, since each $C_j$ is in partial row-echelon
form, each column can contain the leading terms for at most $n$
rows.

At this point, the precise ordering of the rows of $\tilde{C}$ is
not important; we are only interested in the numbers of rows
associated with positive (resp., negative) entries of $I'$ and the
linear relationships between the two sets.  We thus re-order the rows. Let $P$ be the matrix containing the $nN_+$ rows of
$\tilde{C}$ associated with positive entries in $I'$ and let $Q$
be the matrix consisting of the $nN_-$ rows associated with
negative entries of $I'$.  Since $\tilde{C}^*\tilde{I'}\tilde{C}$
is positive semidefinite, so is
\begin{equation}\label{eq: semidefinite}
\begin{bmatrix} P \\Q \end{bmatrix}^*
\begin{bmatrix}I_{nN_+} & 0 \\ 0 & -I_{nN_-} \end{bmatrix}
\begin{bmatrix}P \\ Q \end{bmatrix} = P^* P - Q^* Q,
\end{equation}
and hence {\em the rows of $Q$ are in the linear span of the rows of $P$}.
Within $P$ and $Q$, we may
assume that, if the leading
term of row $i$ appears in column $j$, then the leading term of
row $i+1$ is either in column $j$ or in some column to the
right of column $j$.

Let $m_+$ denote the number of columns of $P$ containing the
leading term of at least one row of $P$, and let $e_+=nN_+ - m_+$.
We think of $e_+$ as the number of ``extra" rows.  Since we can find $m_+$
rows of $P$ with leading terms in $m_+$ distinct columns,
$\rank(P^t) \geq m_+$ and
$\operatorname{nullity}(P^t) =  nN_+ - \rank(P^t) \leq nN_+ - m_+ = e_+$.

More is true; let $h$ denote the number of columns of $Q$ that contain the
leading term of a row of $Q$, but for which the corresponding
column of $P$ is not one of the $m_+$ counted above.  We thus have
a collection of $m_+ + h$ rows of $\tilde{C}$ that are linearly
independent.  On the other hand,
since all rows of $Q$ are in the linear span of
the rows of $P$, $\rank(P^t) \geq m_+ + h$.  Thus
\begin{equation}
\operatorname{nullity}(P^t) = nN_+ - \rank(P^t) \leq nN_+ - m_+ -h = e_+ - h.
\end{equation}
In particular, $h \leq e_+$.

$Q$ has exactly $nN_-$ rows.  However, by
distinguishing two types of rows of $Q$, we can estimate the
number of rows of $Q$ in terms of the number of rows of $P$. Our
first type of row of $Q$ is one with leading term in one of the
$h$ columns counted above.  Since no row of $P$ has leading term
in such a column, there could be as many as $n$ rows of $Q$ with
leading term in a single such column.  $Q$ therefore has at most
$n h$ such rows.  The second type of row of $Q$ is one with leading
term in one of the $m_+$ columns corresponding to a column of $P$
containing a leading term.  Since one of the at most $n$ rows with
a leading term in this column must be in $P$, $Q$ has at most
$(n-1)m_+$ rows of the second type.

This number $(n-1)m_+$ is
still an overestimate for two reasons. First, of the $m_+$ columns, the
left-most has only a single entry, and it appears in $P$.  To see this,
consider the initial monomials of the $f_j$ and $g_j$.  Let $z^\alpha$ be
the one that comes first in the monomial order.  If it were the initial
monomial of, say, $g_J$, then $z_1 g_J$ would have an initial monomial $z_1
z^\alpha$ coming before the initial monomial of any of the $z_k f_j$,
contradicting the fact that $z_1 g_J$ is in the span of the $z_k f_j$.  Thus
$z^\alpha$ is the initial monomial of one of the $f_j$.  Since all the $f_j$
have distinct initial monomials and because our monomial order is
multiplicative, the left-most column of
$\left[\begin{smallmatrix}P\\Q \end{smallmatrix} \right]$
containing a non-zero entry is that corresponding
to $z_1z^\alpha$, and it contains precisely one non-zero entry.  Second, we
must account for the additional $e_+$ rows in $P$ that
also have leading term in one of the $m_+-1$ columns.  Thus
$(n-1)(m_+-1) - e_+$ is still an upper bound for the number of rows in
$Q$ of this second type.  We find:
\begin{equation}
\begin{split}
nN_- &\leq n h + (n-1)(m_+-1) - e_+\\
&\leq n e_+ + (n-1)m_+ - e_+-(n-1)\\
&= (n-1)(e_+ + m_+)-(n-1)\\
&< (n-1)nN_+ .
\end{split}
\end{equation}
\end{proof}

%%%%%%%%%%%%%%%%%%%%%%%%%%%%%%%%%%%%%%%%%%%%%%%%%%%%%%%%%%%%%%%%%%%%%%%

\section{Upper bound on $N_-/N_+$ for $d > 1$}

\begin{lemma}\label{lemma: N-/N+ for d>1}
Let $r(z,\bar{z})$ be a real polynomial on $\C^n$, $n \geq 2$,
and
suppose that $r(z,\bar{z}) \norm{z}^{2d}$ is a squared norm.  Let $(N_+,N_-)$
be the signature pair of $r$.  Then
\begin{equation}\label{eq: N-/N+ for d>1.}
\frac{N_-}{N_+} < \binom{n-1+d}{d} -1.
\end{equation}
\end{lemma}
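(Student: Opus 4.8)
The plan is to mimic the structure of the $d=1$ proof (Lemma~\ref{lemma:thmd1part1}), iterating the multiplication-by-monomials construction $d$ times rather than once. First I would invoke the analogue of Lemma~\ref{lemma:normform} for $\snorm{z}^{2d}$: by the same unitary-block and $\lambda$-scaling argument (Proposition~\ref{prop:lambdanorm} already handles one factor of $\snorm{z}^2$, and one iterates it), we may assume $r(z,\bar z) = \langle I' C \sZ_k, C \sZ_k\rangle$ with $C = \left[\begin{smallmatrix}A\\B\end{smallmatrix}\right]$ in partial row-echelon form, $I' = \left[\begin{smallmatrix}I_{N_+}&0\\0&-I_{N_-}\end{smallmatrix}\right]$. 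Then, writing $\bigl(C\sZ_k\bigr) z^\gamma = C_\gamma \sZ_{k+d}$ for each multi-index $\gamma$ with $\sabs{\gamma}=d$, we get
\begin{equation}
r(z,\bar z)\snorm{z}^{2d} = \sum_{\sabs{\gamma}=d} \binom{d}{\gamma} \langle I' C_\gamma \sZ_{k+d}, C_\gamma \sZ_{k+d}\rangle = \langle \tilde I' \tilde C \sZ_{k+d}, \tilde C \sZ_{k+d}\rangle,
\end{equation}
where $\tilde C$ stacks the $\binom{n-1+d}{d}$ blocks $C_\gamma$ (the binomial coefficients are positive so they may be absorbed), and $\tilde I'$ is the corresponding block-diagonal sign matrix. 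Each $C_\gamma$ is still in partial row-echelon form, because multiplication by the monomial $z^\gamma$ preserves our multiplicative monomial order; hence every column of $\tilde C$ contains the leading term of at most $\binom{n-1+d}{d}$ rows — at most $\binom{n-1+d}{d}$ rows from the $P$-part (positive) and similarly from the $Q$-part. Set $M := \binom{n-1+d}{d}$, so $\tilde C$ has $M N_+$ positive rows and $M N_-$ negative rows.

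Next, exactly as in the $d=1$ argument: positive-semidefiniteness of $\tilde C^* \tilde I' \tilde C$ forces $P^*P - Q^*Q \succeq 0$, so every row of $Q$ lies in the span of the rows of $P$. Let $m_+$ be the number of columns containing a leading term of some row of $P$ and $e_+ = MN_+ - m_+$ the number of ``extra'' rows; let $h$ be the number of columns carrying a leading term of $Q$ but no leading term of $P$. Since these $m_+ + h$ rows are linearly independent yet all rows of $Q$ are in $\operatorname{span}(P)$, we get $\rank(P^t) \geq m_+ + h$, so $h \leq e_+$. Now count rows of $Q$: at most $M$ rows of $Q$ per column in the $h$-set, contributing $\leq Mh$; at most $M-1$ rows of $Q$ per column in the $m_+$-set (one of the $M$ slots is taken by a $P$-leading-term), contributing $\leq (M-1)m_+$; and one must subtract off the $e_+$ extra $P$-rows sitting in those $m_+$ columns. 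The leftmost-column refinement (the first nonzero column of $\left[\begin{smallmatrix}P\\Q\end{smallmatrix}\right]$, corresponding to $z_1 z^\alpha$ for the minimal initial monomial $z^\alpha$, carries exactly one entry, which is in $P$) gives the extra $-(M-1)$ term, just as before. Assembling:
\begin{equation}
M N_- \leq M h + (M-1)(m_+ - 1) - e_+ \leq M e_+ + (M-1) m_+ - e_+ - (M-1) = (M-1)(e_+ + m_+) - (M-1) < (M-1) M N_+,
\end{equation}
which yields $\tfrac{N_-}{N_+} < M - 1 = \binom{n-1+d}{d} - 1$.

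The main obstacle — and the only place where some genuine care is needed beyond transcribing the $d=1$ argument — is verifying the combinatorial counting claim that each column of $\tilde C$ can serve as the leading column of at most $M$ of the stacked rows, and correspondingly that in the $m_+$-columns at most $M-1$ rows of $Q$ (and hence at most $M$ rows total minus one forced $P$-row) appear. This hinges on the fact that the map $\gamma \mapsto (\text{initial monomial of } z^\gamma f_j)$ is injective for fixed $j$ once we know the $f_j$ have distinct initial monomials and the order is multiplicative; so a given monomial $z^\delta$ can be the initial monomial of $z^\gamma f_j$ for at most one pair — but as $j$ and $\gamma$ vary with $\sabs{\gamma}=d$ fixed, the count is bounded by the number of $\gamma$'s that can ``reach'' $z^\delta$, which is at most $M=\binom{n-1+d}{d}$. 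One should also double-check that the leftmost-column refinement still produces a clean $-(M-1)$: the minimal initial monomial $z^\alpha$ among all $f_j, g_j$ must belong to some $f_j$ (otherwise $z_1^d g_J$ would beat every $z^\gamma f_j$ in the order, contradicting $z_1^d g_J \in \operatorname{span}\{z^\gamma f_j\}$), and then $z_1^d z^\alpha$ is the unique minimal nonzero column of $\left[\begin{smallmatrix}P\\Q\end{smallmatrix}\right]$ with a single entry lying in $P$. Everything else is the same bookkeeping as in Lemma~\ref{lemma:thmd1part1} with $n$ replaced by $M$.
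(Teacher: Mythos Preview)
Your proposal is correct and follows essentially the same approach as the paper's own proof, which explicitly says ``We follow the proof of Lemma~\ref{lemma:thmd1part1}'' and then replaces $n$ by $M=\binom{n-1+d}{d}$ throughout. You are actually more careful than the paper in two places: you explicitly note the multinomial coefficients $\binom{d}{\gamma}$ in the expansion of $\snorm{z}^{2d}$ (the paper silently absorbs them), and you spell out why the leftmost nonzero column of $\left[\begin{smallmatrix}P\\Q\end{smallmatrix}\right]$ corresponds to $z_1^d z^\alpha$ and carries a single entry (your first mention of ``$z_1 z^\alpha$'' is a slip---you correct it to $z_1^d z^\alpha$ in the next paragraph).
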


\begin{proof}
We follow the proof of Lemma~\ref{lemma:thmd1part1}.
When we multiply $\langle I' C \sZ_k , C \sZ_k \rangle$
by $\norm{z}^{2d}$ instead of $\norm{z}^2$,
we obtain $\binom{n-1+d}{d}$ matrices $C_j$ rather than $n$.  More explicitly,
order the degree $d$ multi-indices and let $\alpha$
be the $j$th multi-index.  Let $C_j$ be the matrix defined by
\begin{equation}
\bigl( C \sZ_k \bigl) z^\alpha = C_j \sZ_{k+d} .
\end{equation}
As above, since
$C$ is in partial row-echelon form and the monomial order is multiplicative, $C_j$ is in partial row-echelon form as well. Then
\begin{equation}
\begin{split}
r(z,\bar{z}) \norm{z}^{2d} & =
\left\langle
\begin{bmatrix}
I' & 0 & \cdots & 0 \\
0 & I' & \cdots & 0 \\
\vdots & \vdots & \ddots & \vdots \\
0 & 0 & \cdots & I'
\end{bmatrix}
\begin{bmatrix}
C_1 \\
C_2 \\
\vdots \\
C_{\binom{n-1+d}{d}}
\end{bmatrix}
\sZ_{k+d}
,
\begin{bmatrix}
C_1 \\
C_2 \\
\vdots \\
C_{\binom{n-1+d}{d}}
\end{bmatrix}
\sZ_{k+d} \right\rangle
\\
& = \langle \tilde{I'} \tilde{C} \sZ_{k+d},
\tilde{C} \sZ_{k+d} \rangle .
\end{split}
\end{equation}
In the matrix
$\tilde{C}$, each column contains the leading term of at most
$\binom{n-1+d}{d}$ rows, though, as above, the left-most non-zero column contains only a single non-zero entry since it comes about by multiplying the least monomial $z^\alpha$ in all the $f_j$ by $z_1^d$.  Thus in a manner identical to the above we obtain:
\begin{equation}
\binom{n-1+d}{d} N_- < \left( \binom{n-1+d}{d} - 1 \right)
\binom{n-1+d}{d} N_+ .
\end{equation}
\end{proof}

\begin{remark}
The proof does not use anything about $\norm{z}^{2d}$ except
that it is a squared norm,
its matrix of coefficients is diagonal, and it has
rank $\binom{n-1+d}{d}$.  Therefore we also obtain the following statement.
\begin{cor}
Let $r(z,\bar{z})$ be a real polynomial on $\C^n$, and consider
$s(z,\bar{z}) = \sum_{j=1}^L \abs{z^{\alpha_j}}^2$, where
$\alpha_1,\dots,\alpha_L$ are distinct multi-indices.
Suppose $r(z,\bar{z}) s(z,\bar{z})$ is a squared norm.  If $(N_+,N_-)$
is the signature pair of $r$, then
\begin{equation}
\frac{N_-}{N_+} < L -1.
\end{equation}
\end{cor}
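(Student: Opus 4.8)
The plan is to run the proof of Lemma~\ref{lemma: N-/N+ for d>1} essentially verbatim, with the rank $L$ of $s(z,\bar z)$ playing the role of $\binom{n-1+d}{d}$. As the remark observes, that proof uses nothing about $\norm{z}^{2d}$ beyond: it is a squared norm, its matrix of coefficients in the monomial basis is diagonal, and that matrix has a prescribed rank. The polynomial $s(z,\bar z) = \sum_{j=1}^L \abs{z^{\alpha_j}}^2$ has all three properties: it is visibly a sum of Hermitian squares; since $\alpha_1,\dots,\alpha_L$ are distinct, its coefficient matrix is diagonal; and that matrix has rank exactly $L$. So I only need to revisit the two places where the specific shape of $\norm{z}^{2d}$ entered the argument.

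First I would check that Proposition~\ref{prop:lambdanorm}, and hence Lemma~\ref{lemma:normform}, remain valid with $s(z,\bar z)$ in place of $\norm{z}^2$: if $\bigl(\snorm{f}^2-\snorm{g}^2\bigr)s$ is a squared norm then, for $\lambda\in[0,1]$,
\begin{equation*}
\bigl(\snorm{f}^2-\lambda\snorm{g}^2\bigr)s=\bigl(\snorm{f}^2-\snorm{g}^2\bigr)s+(1-\lambda)\snorm{g}^2 s,
\end{equation*}
and $\snorm{g}^2 s$ is a squared norm as a product of squared norms. Thus I may assume $r(z,\bar z)=\langle I' C\sZ_k, C\sZ_k\rangle$ with $I'$ of signature $(N_+,N_-)$ and $C$ in partial row-echelon form with respect to the monomial order fixed in the Preliminaries.

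Next, set $D:=\max_j\sabs{\alpha_j}$ and define $C_j$ by $(C\sZ_k)z^{\alpha_j}=C_j\sZ_{k+D}$. Then
\begin{equation*}
r(z,\bar z)\,s(z,\bar z)=\sum_{j=1}^L\langle I' C_j\sZ_{k+D}, C_j\sZ_{k+D}\rangle=\langle \tilde{I'}\tilde C\sZ_{k+D}, \tilde C\sZ_{k+D}\rangle,
\end{equation*}
where $\tilde C$ stacks $C_1,\dots,C_L$ and $\tilde{I'}$ is the block-diagonal matrix with $L$ copies of $I'$. Since the monomial order is multiplicative, each $C_j$ is again in partial row-echelon form, so each column of $\tilde C$ carries the leading term of at most $L$ of its rows. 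For the left-most non-zero column I would argue, exactly as in Lemma~\ref{lemma:thmd1part1}, as follows. Let $z^{\beta_0}$ be the least initial monomial of the $f_j$ and $g_j$ (these are pairwise distinct because $C$ is in partial row-echelon form), and let $z^{\alpha_{j_0}}$ be the least of $z^{\alpha_1},\dots,z^{\alpha_L}$. By multiplicativity, $z^{\beta_0}z^{\alpha_{j_0}}$ is strictly smaller than every other product $z^\gamma z^{\alpha_j}$ with $z^\gamma$ occurring in $C\sZ_k$; hence it indexes the left-most non-zero column of $\tilde C$, and the only non-zero entry there comes from the row of $C$ whose pivot is $z^{\beta_0}$. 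If that pivot belonged to some $g_J$, then $z^{\alpha_{j_0}}g_J$ would have initial monomial strictly preceding the initial monomial of every $z^{\alpha_j}f_i$, contradicting the fact — coming from positive semidefiniteness of $\tilde C^*\tilde{I'}\tilde C$ — that the rows of the negative block $Q$ lie in the span of the rows of the positive block $P$. So the single entry in that column lies in $P$.

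With these two points established, the counting in Lemma~\ref{lemma:thmd1part1} goes through unchanged after replacing $n$ by $L$ throughout ($P$ has $L N_+$ rows, $Q$ has $L N_-$ rows, each column holds at most $L$ leading terms, and the left-most carries exactly one, in $P$), and it produces $L\,N_- < (L-1)\,L\,N_+$, hence $N_-/N_+ < L-1$. I expect the only real care to be in the left-most-column step: one must use multiplicativity of the monomial order both to pin down the smallest product $z^{\beta_0}z^{\alpha_{j_0}}$ and to exclude accidental cancellation there; everything else is a routine transcription of the $d>1$ argument.
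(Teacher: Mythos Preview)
Your proposal is correct and follows exactly the approach the paper intends: the paper's own ``proof'' of this corollary is simply the remark that the argument of Lemma~\ref{lemma: N-/N+ for d>1} uses nothing about $\norm{z}^{2d}$ beyond its being a diagonal squared norm of a given rank, and you have faithfully (and more carefully than the paper) carried out that transcription, including the needed check that Proposition~\ref{prop:lambdanorm}/Lemma~\ref{lemma:normform} generalize and that multiplicativity of the monomial order still isolates a unique leftmost non-zero column when the $\alpha_j$ may have differing degrees.
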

\end{remark}

%%%%%%%%%%%%%%%%%%%%%%%%%%%%%%%%%%%%%%%%%%%%%%%%%%%%%%%%%%%%%%%%%%%%%%%

\section{A class of examples for $d>1$}

Lemma \ref{lemma: N-/N+ for d>1} merely gives an upper
bound for $N_-/N_+$ for $d>1$; it remains to determine whether the
result is sharp.

We first discuss the case $n=2$.
Lemma \ref{lemma: N-/N+ for d>1} gives $N_-/N_+ < d$, which we claim is sharp for all $d$.  To prove this, we construct a family $\{p_{D}\}$ of polynomials in two real variables such that $p_D(x,y)(x+y)^d$ has all non-negative coefficients and the ratio $N_-(p_D)/N_+(p_D)$ of negative to positive coefficients tends to $d$ as $D \to \infty$. The idea of the construction is quite simple; define $p_D(x,y) =\sum c_j x^{D-j}y^j$ where the first and last coefficients are positive and the interior coefficients repeat a pattern of $d$ negatives followed by a positive.  

More explicitly, suppose $D=(d+1)m$ for $m\in \bN$ and define $p_D(x,y)=\sum_{j=0}^D \gamma(D-j,j) x^{D-j} y^j$, where
\begin{equation}
\gamma(D-j,j)=\begin{cases}2^d -1 & j \equiv 0 \mod d+1 \\
-1 & \text{otherwise} \end{cases}.
\end{equation}
For this family, $N_-(p_D)/N_+(p_D)=dD/(D+d+1)$, which tends to $d$ as $D \to \infty$.  It only remains to verify as we did in the proof of part (iii) of Theorem \ref{thm:monthm1} that the coefficients of $p_D$ have been chosen so that $S_D$ has all nonnegative coefficients. We omit the details.

When $n=3$, Lemma~\ref{lemma: N-/N+ for d>1}
gives
\begin{equation}\label{eq: N-/N+ for n=3, d>1.}
\frac{N_-}{N_+} < \binom{n-1+d}{d} -1=\frac{(d+1)(d+2)}{2}-1.
\end{equation}
When $d=1$, this gives $N_-/N_+ < 2$, which we know to be sharp.
It remains open whether \eqref{eq: N-/N+ for d>1.} is sharp for $d>1$.

\begin{remark}
For $n=3$, we were able to construct
polynomials $p$ for which $S:=p \cdot \ell^d$
has all non-negative coefficients and with
\begin{equation}\label{eq: optimal ratio for knight's moves}
\frac{N_-(p)}{N_+(p)} \geq \left\lfloor \frac{(d+2)^2}{3}
\right\rfloor -1 - \varepsilon.
\end{equation}
We omit the details; we simply mention that
the construction can be done by considering the diagram to be an infinite plane
and by using a pattern of $P$s generated by two generalized
knight moves.  Computer experimentation suggests this bound may, in fact,
be optimal.  Therefore, we suspect \eqref{eq: N-/N+ for n=3, d>1.}
is not sharp.
\end{remark}

Next we find examples that show that the bound
\eqref{eq: N-/N+ for d>1.} is of the right order, i.e.,
for a fixed $n$, of order  $d^{n-1}$.
This lemma is the last part of the proof of Theorem~\ref{thm:thmalld}.

\begin{lemma} \label{lemma:correctorder}
Fix $d>1$ and $n>2$.  There exists a polynomial $p \in
\R[x_1, \dots, x_n]$ for which $S_d(p):=p \cdot \ell^d$ has all
non-negative coefficients and with
\begin{equation}\label{eq:correctorder}
\frac{N_-(p)}{N_+(p)} \geq
{\left(\frac{1}{2^{\frac{n(n-1)}{2}}}\right)} \, d^{n-1} =C(d,n).
\end{equation}
\end{lemma}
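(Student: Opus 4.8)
The plan is to construct $p$ by a "tiling" argument that generalizes the $n=3$ pattern suggested by Figure~\ref{fig:newtontile} and the explicit family in part (iii) of Theorem~\ref{thm:monthm1}, but now with $\ell^d$ in place of $\ell$. I would work with homogeneous $p$ of large degree $D$ (to be sent to $\infty$), so that the monomials of $p$ sit in the simplex $\{\alpha\in\Z_{\geq 0}^n:|\alpha|=D\}$ and the monomials of $S_d(p)=p\cdot\ell^d$ sit in $\{|A|=D+d\}$; a term $x^A$ of $S_d(p)$ receives contributions exactly from those $x^\alpha$ in $p$ with $\alpha = A-\delta$, $|\delta|=d$, $\delta\geq 0$, i.e. from the $\binom{n-1+d}{d}$ lattice points of a "$d$-simplex" sitting below $A$. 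The sign of the coefficient $\gamma(\alpha)$ will be assigned by a rule that depends only on the residues of certain linear forms in $\alpha$ modulo appropriate integers, exactly as in \eqref{eq: def of coeff.}, so that positive coefficients can be taken large enough to dominate. Concretely: reserve all $\alpha$ touching the boundary of the simplex (some $\alpha_k$ small, say $\alpha_k< 2d$) to be positive "boundary" terms — these are $O(D^{n-2})$ of them and will not affect the leading-order ratio — and for interior $\alpha$ define $\gamma(\alpha)$ by a periodic pattern chosen so that each "$d$-simplex" of $\binom{n-1+d}{d}$ interior points below any interior $A$ contains at least one $\alpha$ with $\gamma(\alpha)>0$. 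Setting the positive value of $\gamma$ to $2^{n(n-1)/2}$ (or any sufficiently large constant) and the negative value to $-1$ then forces $c(A)\geq 0$ for every $A$.

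The key combinatorial step is the design of the periodic sign pattern on the interior so that the density of positive terms is as low as possible while still meeting every translated $d$-simplex. The natural device, mirroring the second line of \eqref{eq: def of coeff.} but iterated, is to use a nested family of linear congruences: think of the interior lattice $\{\alpha:\alpha_k\geq 2d\}$ modulo a sublattice $\Lambda$ of index roughly $2^{n(n-1)/2}\,d^{n-1}$, and declare $\gamma(\alpha)>0$ iff $\alpha$ lies in one fixed coset. The content of the construction is choosing $\Lambda$ so that every translate by a $\delta$ with $|\delta|=d,\ \delta\geq 0$, of this coset still meets the $d$-simplex — equivalently, so that the $\binom{n-1+d}{d}$ vectors $\{\delta:|\delta|=d\}$, reduced mod $\Lambda$, still "cover" enough of $\Z^n/\Lambda$. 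I expect this to be arranged inductively on $n$: build the pattern for $n-1$ variables with spacing $d$ in each of the first $n-2$ "free" directions, then extend to $n$ variables by also making the $n$th coordinate periodic with period a constant times $d$, picking up one extra factor of $2^{n-1}$ (or similar) per dimension; telescoping the per-dimension factors gives the $2^{n(n-1)/2}$ in the denominator of \eqref{eq:correctorder}.

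Once the pattern is fixed, the counting is routine and parallels part (iii) of Theorem~\ref{thm:monthm1}: the number of interior $A$ with $|A|=D+d$ is $\binom{D+d-1}{n-1}\sim D^{n-1}/(n-1)!$; each negative $\alpha$ contributes to at most $\binom{n-1+d}{d}$ terms of $S_d(p)$ while each interior $A$ needs at least one of its $\binom{n-1+d}{d}$ contributing $\alpha$'s to be positive, giving $N_-(p)\geq c\cdot|{\rm interior}|$ with $c$ the interior density of negative terms, namely $1 - (\text{density of positives}) = 1 - 2^{-n(n-1)/2}d^{-(n-1)}\bigl(1+o(1)\bigr)$; meanwhile $N_+(p)\leq (\text{density of positives})\cdot|{\rm interior}| + O(D^{n-2})$. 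Dividing and letting $D\to\infty$ leaves $N_-(p)/N_+(p)\geq 2^{n(n-1)/2}d^{n-1}\bigl(1+o(1)\bigr)^{-1}$, which exceeds $C(d,n)=2^{-n(n-1)/2}d^{n-1}$ with a comfortable margin (indeed by a factor roughly $2^{n(n-1)}$), so even a lossy choice of constants in the pattern suffices.

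The main obstacle is the middle paragraph: verifying that the periodic sign pattern really does meet every translated $d$-simplex, i.e. that no coefficient of $S_d(p)$ fails to receive a positive contribution. This is where the interplay between the period (which we want large, to make positives sparse) and the "reach" $d$ of the simplex (which bounds how far translates can move) is delicate; getting the period up to order $d^{n-1}$ rather than merely $d$ is exactly what requires the nested/inductive construction rather than a single congruence, and it is the only genuinely nontrivial point. I would handle the boundary terms and the final limit only briefly, as in the $n=2$ and $n=3$ discussions above, and state explicitly that the routine congruence verification is omitted.
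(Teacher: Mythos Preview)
Your final count is internally inconsistent and, as written, impossible: you posit positives on a single coset of a sublattice $\Lambda$ of index roughly $2^{n(n-1)/2}\, d^{n-1}$ and conclude $N_-/N_+ \approx 2^{n(n-1)/2}\, d^{n-1}$. But this exceeds the upper bound $\binom{n-1+d}{d}-1 \sim d^{n-1}/(n-1)!$ from Lemma~\ref{lemma: N-/N+ for d>1}; already for $n=3$ it would give a ratio near $8d^2$ against a ceiling below $(d+2)^2/2$. The obstruction is structural: if the positives form one coset of $\Lambda$, then the requirement that every translated $d$-simplex meet that coset forces the set $\{\delta\geq 0:|\delta|=d\}$ to surject onto the quotient, so the index of $\Lambda$ can be at most $\binom{n-1+d}{d}$, not $2^{n(n-1)/2}d^{n-1}$. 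Presumably you intended the reciprocal index, but then the ``comfortable margin by a factor $2^{n(n-1)}$'' evaporates and you are left needing precisely the construction you have deferred.

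That construction---the step you label routine---is where the content lives, and it is \emph{not} a product of equal periods $d$ in each direction. The paper inducts on $n$: along the new coordinate it places positive ``rows'' only every $\lfloor d/2\rfloor+1$ steps, with all intervening rows entirely negative. The crucial observation is that a $d$-simplex, intersected with the one positive row it is guaranteed to reach, is at worst a $\lceil d/2\rceil$-simplex in the remaining variables; hence on the positive rows one invokes the inductive hypothesis for the \emph{reduced} parameter $\lceil d/2\rceil$, not for $d$. This trade-off---gaining a spacing factor $\sim d/2$ in the new direction at the cost of halving $d$ for the recursive call---is exactly what produces the recursion $C_n=C_{n-1}/2^{n-1}$ and hence the constant $2^{-n(n-1)/2}$. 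Your sketch, which keeps ``spacing $d$'' uniformly in every direction and postpones the verification, omits this reduction and cannot be completed as stated.
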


\begin{figure}[h!t]
\begin{center}
\begin{minipage}[b]{2.1in}
\centering
\includegraphics{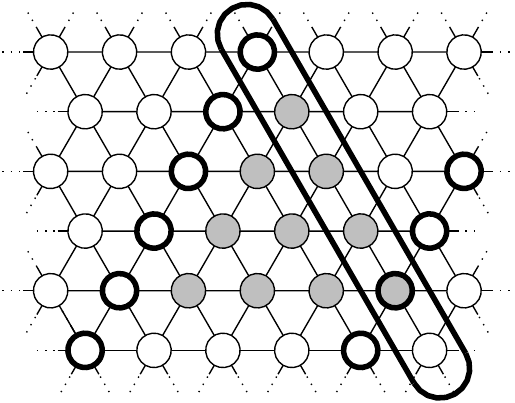}
\end{minipage}
\begin{minipage}[b]{2.1in}
\centering
\includegraphics{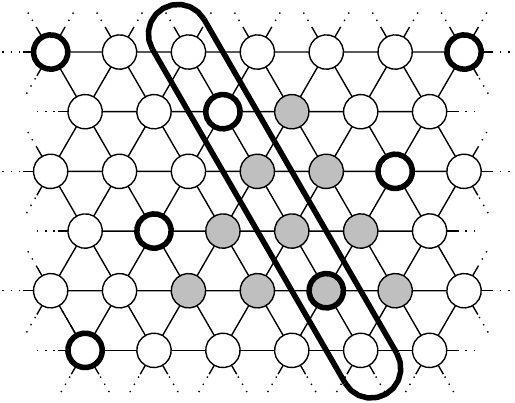}
\end{minipage}
\begin{minipage}[b]{2.1in}
\centering
\includegraphics{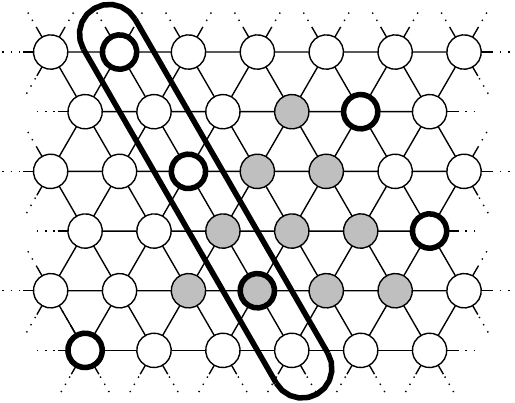}
\end{minipage}
\caption{Diagrams illustrating the first step in the induction for the
proof of Lemma~\ref{lemma:correctorder}. Here, $\nu=0$, $\nu=1$, and
$\nu=2$; and $d=3$.}\label{fig:newtonind}
\end{center}
\end{figure}

\begin{proof}
The proof is by induction on the number of variables $n$.  When $n=2$, we
proved above that we can find polynomials for which $p \cdot \ell^d$ has non-negative coefficients with the ratio $N_-/N_+$ arbitrarily close to $d$. Thus there exists a polynomial for which the ratio exceeds $d/2$.  Thus the result holds for $n=2$.

We proceed to the inductive step.  To simplify notation, we
dehomogenize by setting $x_{n}=1$.  We therefore seek
nonhomogeneous polynomials $p(x_1,\dots,x_{n-1})$ such that
the product $p(x_1,\dots,x_{n-1}) {(x_1+\dots+x_{n-1}+1)}^d$ has nonnegative coefficients.
Suppose that for $n-1$
there exists $p'$ such that
\begin{equation}\label{eq for p'}
\frac{N_-(p')}{N_+(p')} \geq C(d,n-1) - \varepsilon/2.
\end{equation}
That is, $p'$ is a nonhomogeneous polynomial in $(n-1)-1 = n-2$ variables
and multiplying by ${(x_1+\dots+x_{n-2}+1)}^d$ yields a polynomial with
nonnegative coefficients.

Let $x = (x',x_{n-1})$ where $x' \in \R^{n-2}$ so that $p'(x')=\sum_{\alpha} \gamma'(\alpha)x'^\alpha$.  We define
\begin{equation}
p(x)=\sum_{j=0}^k \sum_{\alpha} \gamma(\alpha,j)x'^\alpha x_{n-1}^j
\end{equation}
for appropriately chosen coefficients $\gamma(\alpha,j)$ and for $k$ large.
For each $j$ between 1 and $k-1$, take $\gamma(\alpha,j)=\gamma'(\alpha)$.
In other words, for each of these $j$ we simply repeat the pattern of
positives and negatives from $p'$.  For $j=0$ and $j=k$, we take
sufficiently large positive coefficients to guarantee that $p \ell^d$ has only non-negative coefficients.

When $n=3$, the situation
is illustrated in the first diagram of Figure~\ref{fig:newtonind}.
In the diagram, thick circles are positive
coefficients and thin circles are negative coefficients, as before.
A ``row'' in the diagram corresponding to a fixed power of $x_2$ (a fixed $j$)
is marked with a thick line.  Finally, the shaded circles are the coefficients
that contribute to a single coefficient in $p \cdot \ell^3$. Therefore
any such triangle (or simplex in higher dimensions) must contain a positive
coefficient, as it does in our diagram.
By translating this triangle, we can see the different collections of terms in $p$ that contribute to
different monomials in $S$.
Notice that we cannot place this
triangle any differently so that it includes only negative terms.
Further notice that on the marked ``row'' we have a diagram for
$n=2$. This is how we are using the inductive hypothesis.
The diagram illustrates
only what happens in the ``interior'' and not on the boundary, where $j=0$ or $j=k$.

By taking a large
enough degree to make the contribution to $N_+$ from the ``rows'' $j=0$ and $j=k$
arbitrarily small in the ratio, we find
\begin{equation}
\frac{N_-(p)}{N_+(p)} \geq C(d,n-1) - \varepsilon .
\end{equation}

We can now improve upon this technique; suppose that instead of using $p'$
that satisfied \eqref{eq for p'} for $d$ we take a $p'$ satisfying the equation for $d-1$.
We can then take $\gamma(\alpha, j)=\gamma'(\alpha)$ only for {\it even} $j$ between $1$ and $k$ and can take all $\gamma(\alpha,j)$ for odd $j$ to be negative.
After possibly making the positive
coefficients larger, we conclude that
$p \cdot \ell^d$ has positive coefficients.
This process is illustrated in the second diagram of
Figure~\ref{fig:newtonind}.  Notice that only every second ``row'' contains
positives, and that we took the positives to be closer together
by exactly one on
the rows that do contain positives.

Again by making the degree
large enough we obtain a $p$ such that
\begin{equation}
\frac{N_-(p)}{N_+(p)} \geq 1+ 2C(d-1,n-1) - \varepsilon .
\end{equation}
By repeating this procedure (as illustrated by skipping two ``rows''
in the last diagram of Figure~\ref{fig:newtonind}) we can lower
$d$ by $\nu$ to obtain a $p$ such that
\begin{equation}
\frac{N_-(p)}{N_+(p)} \geq \nu+ (\nu+1)C(d-\nu,n-1) - \varepsilon .
\end{equation}
Picking $\nu = \lfloor \frac{d}{2} \rfloor$ we obtain a polynomial with
\begin{equation}
\frac{N_-(p)}{N_+(p)} \geq \left\lfloor \frac{d}{2} \right\rfloor
+ \left(\left\lfloor \frac{d}{2} \right\rfloor +1\right)
C\left(\left\lceil \frac{d}{2} \right\rceil,n-1\right) - \varepsilon .
\end{equation}
Let us prove $C(d,n) \geq C_n d^{n-1}$ by induction.
For $n=2$, we have seen that we can take $C_{2} = \frac{1}{2}$.
Assume the bound $C(d,n-1) \geq C_{n-1} d^{n-2}$
holds for $n-1$.
We compute for $n > 2$,
\begin{equation}
\left\lfloor \frac{d}{2} \right\rfloor
+ \left(\left\lfloor \frac{d}{2} \right\rfloor +1\right)
C\left(\left\lceil \frac{d}{2} \right\rceil,n-1\right) - \varepsilon
\geq
\left(\frac{d}{2} \right)
C_{n-1} {\left( \frac{d}{2} \right)}^{n-2}
=
\frac{C_{n-1}}{2^{n-1}} d^{n-1} .
\end{equation}
We are allowed to drop the $\varepsilon$ because we are
dropping
$\left\lfloor \frac{d}{2} \right\rfloor$ from the right-hand side.
Therefore we can take $C_{n} =
\frac{C_{n-1}}{2^{n-1}}$ and $C_2 = \frac{1}{2}$ to  obtain
$C_n = \frac{1}{2^{n(n-1)/2 }}$, and therefore \eqref{eq:correctorder}
holds.
\end{proof}

We have finished the proof of Theorem~\ref{thm:thmalld}.  As our final proposition, we show
that the classes $\Psi_j$ are distinct for all $j$.

\begin{prop} \label{prop:qsdistinct}
For all $j = 0,1,2,\dots$,
\begin{equation}
\Psi_j \subsetneq \Psi_{j+1} .
\end{equation}
\end{prop}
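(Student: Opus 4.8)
The plan is to dispatch the inclusion $\Psi_j \subseteq \Psi_{j+1}$ immediately and then prove strictness, in each ambient dimension $n \geq 2$, by reducing to a two-variable construction. For the inclusion: if $r(z,\bar z)\norm{z}^{2j} = \norm{h(z)}^2$ for a holomorphic polynomial map $h$, then $r(z,\bar z)\norm{z}^{2(j+1)} = \norm{h(z)}^2\norm{z}^2 = \norm{h(z)\otimes z}^2$ is again a squared norm, so $r \in \Psi_{j+1}$.

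For the strictness I would construct a diagonal $r \in \Psi_{j+1}\setminus\Psi_j$. By the dictionary of Section~2 between diagonal Hermitian polynomials and real polynomials on $\{x \in \R^n : x_k \geq 0\}$, a diagonal bihomogeneous $r$ lies in $\Psi_k$ precisely when its associated real polynomial $p$ has the property that $p\,\ell^k$ has only nonnegative coefficients, where $\ell = x_1+\cdots+x_n$. So it suffices to produce a homogeneous $p \in \R[x_1,\dots,x_n]$ with $p\,\ell^{j+1}$ having only nonnegative coefficients but $p\,\ell^j$ having a negative coefficient. I would build such a $p$ from a two-variable example by the substitution $p(x_1,\dots,x_n) := q(x_1+\cdots+x_{n-1},\,x_n)$, where $q(u,v)$ is homogeneous with $q\,(u+v)^{j+1}$ having only nonnegative coefficients and $q\,(u+v)^j$ having a negative one. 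Indeed, setting $U = x_1+\cdots+x_{n-1}$ and $V = x_n$, so that $\ell = U+V$, we have $p\,\ell^k = q(U,V)(U+V)^k$; expanding each power $U^a = (x_1+\cdots+x_{n-1})^a$ by the multinomial theorem, whose coefficients are all strictly positive, shows that the coefficient of $x_1^{c_1}\cdots x_n^{c_n}$ in $p\,\ell^k$ is a strictly positive multiple of the coefficient of $u^{c_1+\cdots+c_{n-1}}v^{c_n}$ in $q\,(u+v)^k$. As $(c_1,\dots,c_n)$ ranges over compositions of $\deg q + k$, the pair $(c_1+\cdots+c_{n-1},\,c_n)$ ranges over all pairs of nonnegative integers summing to $\deg q + k$, so $p\,\ell^k$ has only nonnegative coefficients if and only if $q\,(u+v)^k$ does, for every $k$. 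Thus $p$ has the desired property, and with it the diagonal $r$ it determines.

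To produce the two-variable $q$ I would use the examples already at hand. Take $q(u,v)$ to be the two-variable polynomial $p_D$ of the present section with parameter $d = j+1$ and with $D = (d+1)m$, $m = j+1$ (so $D = (j+1)(j+2)$). By construction $q\,(u+v)^{j+1}$ has only nonnegative coefficients, and $N_-(q)/N_+(q) = dD/(D+d+1) = (j+1)^2/(j+2) = j + \tfrac{1}{j+2} > j$. This strictly exceeds the value $\binom{1+j}{j}-1 = j$ that, by Lemma~\ref{lemma: N-/N+ for d>1}, bounds $N_-/N_+$ for any member of $\Psi_j$ in two variables (when $j = 0$ this just records that $\Psi_0$ consists of squared norms, which have $N_- = 0$, while $N_-(q) = 1$). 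Hence $q \notin \Psi_j$, i.e., $q\,(u+v)^j$ has a negative coefficient, completing the construction.

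There is no real obstacle here: the inclusion is formal, and the two-variable input is a direct appeal to the family $p_D$ of this section together with the upper bound of Lemma~\ref{lemma: N-/N+ for d>1}. The one place that deserves care is the substitution identity — specifically, verifying that monomials of $p\,\ell^k$ arising from distinct powers $U^a$ cannot cancel. That is automatic, since the exponent of $x_n$ recovers $b$ and the total degree in $x_1,\dots,x_{n-1}$ recovers $a$, so the correspondence from monomials of $q\,(u+v)^k$ to ``diagonal classes'' of monomials of $p\,\ell^k$ is a bijection carrying each coefficient to a positive multiple of itself.
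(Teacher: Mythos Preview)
Your argument is correct. The inclusion is handled exactly as one would expect, and your strictness argument is sound: the substitution $p(x_1,\dots,x_n)=q(x_1+\cdots+x_{n-1},x_n)$ does transport the sign pattern of $q\,(u+v)^k$ faithfully to $p\,\ell^k$ (the multinomial coefficients are strictly positive, and the exponent of $x_n$ together with the total degree in $x_1,\dots,x_{n-1}$ recovers $(a,b)$, so there is no cancellation). Your choice of $q=p_D$ with $d=j+1$, $m=j+1$ gives $N_-(q)/N_+(q)=(j+1)^2/(j+2)>j$, and Lemma~\ref{lemma: N-/N+ for d>1} in two variables then forces a negative coefficient in $q\,(u+v)^j$. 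The $j=0$ case you handle separately is fine.

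Your route, however, is genuinely different from the paper's. The paper gives a direct, self-contained construction: the polynomial
\[
q_k(x)=x_1^k+x_2^k+x_2^{k-1}x_3+\cdots+x_2^{k-1}x_n-\varepsilon\,x_1x_2^{k-1}
\]
has a \emph{single} negative monomial, and one checks by elementary monomial bookkeeping that $q_{d+1}\ell^d$ always retains the negative term $-\varepsilon\,x_1^{d+1}x_2^{k-1}$ (nothing can cancel it), whereas $q_{d+1}\ell^{d+1}$ has nonnegative coefficients for $\varepsilon$ small. Your proof instead leverages two of the paper's main results---the sharp $n=2$ family $p_D$ of this section and the upper bound of Lemma~\ref{lemma: N-/N+ for d>1}---and deduces strictness from the numerical inequality $N_-/N_+>j$. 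This is elegant reuse of established machinery, and it is a nice observation that sharpness of the two-variable bound already implies the whole chain of strict inclusions. The paper's approach, by contrast, is more elementary and yields an extra qualitative point: $q_{d+1}\notin\Psi_d$ no matter how small $\varepsilon>0$ is made, so the obstruction is not about the \emph{size} of the negative part but about its placement.
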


\begin{proof}
As above, we need only construct real polynomials.  Define
\begin{equation}
q_k(x) =
x_1^k + x_2^k
+ x_2^{k-1}x_3
+ x_2^{k-1}x_4
+ \dots + x_2^{k-1}x_n
\quad
-
\quad
\varepsilon
x_1x_2^{k-1} .
\end{equation}

The only monomial of
\begin{equation}
-
\varepsilon
x_1x_2^{k-1} (x_1 + x_2 + \dots + x_n)^d
\end{equation}
that does not appear in
\begin{equation}
(x_2^k
+ x_2^{k-1}x_3
+ x_2^{k-1}x_4
+ \dots + x_2^{k-1}x_n)
(x_1 + x_2 + \dots + x_n)^d
\end{equation}
for all $d = 1,2,\dots$,
is the term $-\varepsilon x_1^{d+1}x_2^{k-1}$.  It appears in
$x_1^k (x_1 + x_2 + \dots + x_n)^d$
when $k = d+1$, but not for any smaller $d$.
By taking $\varepsilon > 0$ small enough we obtain that
$q_k \cdot \ell^{d+1}$ has all positive coefficients in this case.

Therefore $q_{d+1}$ is in $\Psi_{d+1}$, but not in $\Psi_d$.  Notice that
$q_{d+1} \notin \Psi_d$ even if we make the negative coefficient arbitrarily
small.
\end{proof}

%%%%%%%%%%%%%%%%%%%%%%%%%%%%%%%%%%%%%%%%%%%%%%%%%%%%%%%%%%%%%%%%%%%%%%%%%%%%

%FIXME: else I don't get links, weird
%\renewcommand\MR[1]{\relax\ifhmode\unskip\spacefactor3000 \space\fi
  %\def\@tempa##1:##2:##3\@nil{%
    %\ifx @##2\@empty##1\else\textbf{##1:}##2\fi}%
  %\href{http://www.ams.org/mathscinet-getitem?mr=#1}{MR \@tempa#1:@:\@nil}}
\def\MR#1{\relax\ifhmode\unskip\spacefactor3000 \space\fi%
  \href{http://www.ams.org/mathscinet-getitem?mr=#1}{MR#1}}

\begin{bibdiv}
\begin{biblist}

\bib{CD}{article}{
   author={Catlin, David W.},
   author={D'Angelo, John P.},
   title={Positivity conditions for bihomogeneous polynomials},
   journal={Math.\ Res.\ Lett.},
   volume={4},
   date={1997},
   number={4},
   pages={555--567},
   issn={1073-2780},
   review={\MR{1470426}},
}

\bib{DAngelo:hilbert}{article}{
   author={D'Angelo, John P.},
   title={Hermitian analogues of Hilbert's 17-th problem},
   journal={Adv.\ Math.},
   volume={226},
   date={2011},
   number={5},
   pages={4607--4637},
   issn={0001-8708},
   review={\MR{2770459}},
   doi={10.1016/j.aim.2010.12.013},
}

\bib{DL:pfi}{article}{
  author={D'Angelo, John P.},
  author={Lebl, Ji{\v r}\'i},
  title={Pfister's theorem fails in the Hermitian case},
  journal={Proc.\ Amer.\ Math.\ Soc.},
  volume={140},
  year={2012},
  pages={1151--1157},
   review={\MR{2869101}},
  note={\href{http://arxiv.org/abs/1010.3215}{arXiv:1010.3215}},
}
		
\bib{DAngeloCarus}{book}{
    author={D'Angelo, John P.},
    title={Inequalities from Complex Analysis},
    publisher={MAA},
    year={2002},
    series={Carus Mathematical Monographs},
}

\bib{DAngeloVarolin}{article}{
   author={D'Angelo, John P.},
   author={Varolin, Dror},
   title={Positivity conditions for Hermitian symmetric functions},
   journal={Asian J.\ Math.},
   volume={8},
   date={2004},
   number={2},
   pages={215--231},
   issn={1093-6106},
   review={\MR{2129535}},
}

\bib{Ebenfelt:partrig}{unpublished}{
  author={Ebenfelt, Peter},
  title={Partial rigidity of degenerate CR embeddings into spheres},
  note={preprint \href{http://arxiv.org/abs/1208.2731}{arXiv:1208.2731}},
}

\bib{Grundmeier}{article}{
   author={Grundmeier, Dusty},
   title={Signature pairs for group-invariant Hermitian polynomials},
   journal={Internat.\ J.\ Math.},
   volume={22},
   date={2011},
   number={3},
   pages={311--343},
   issn={0129-167X},
   review={\MR{2782691}},
   doi={10.1142/S0129167X11006775},
}

\bib{huang:lin}{article}{
   author={Huang, Xiaojun},
   title={On a linearity problem for proper holomorphic maps between balls
   in complex spaces of different dimensions},
   journal={J.\ Differential Geom.},
   volume={51},
   date={1999},
   number={1},
   pages={13--33},
   issn={0022-040X},
   review={\MR{1703603}},
}

\bib{Polya}{article}{
   author={P\'olya, George},
   title={\"Uber positive Darstellung von Polynomen.},
   journal={Vierteljahrsschrift Natur.\ Ges.\ in Z\"urich},
   volume={73},
   pages={141--145},
   reprint={
     author={P{\'o}lya, George},
     title={Collected papers},
     note={Vol. II: Location of zeros;
     Edited by R. P. Boas;
     Mathematicians of Our Time, Vol. 8},
     publisher={The MIT Press, Cambridge, Mass.-London},
     date={1974},
     pages={x+444},
     review={\MR{0505094}},
   }
}

\bib{PowersReznick}{article}{
   author={Powers, Victoria},
   author={Reznick, Bruce},
   title={A new bound for P\'olya's theorem with applications to polynomials
   positive on polyhedra},
   note={Effective methods in algebraic geometry (Bath, 2000)},
   journal={J. Pure Appl. Algebra},
   volume={164},
   date={2001},
   number={1-2},
   pages={221--229},
   issn={0022-4049},
   review={\MR{1854339}},
   doi={10.1016/S0022-4049(00)00155-9},
}

\bib{Quillen}{article}{
   author={Quillen, Daniel G.},
   title={On the representation of hermitian forms as sums of squares},
   journal={Invent.\ Math.},
   volume={5},
   date={1968},
   pages={237--242},
   issn={0020-9910},
   review={\MR{0233770}},
}

\bib{ToYeung}{article}{
   author={To, Wing-Keung},
   author={Yeung, Sai-Kee},
   title={Effective isometric embeddings for certain Hermitian holomorphic
   line bundles},
   journal={J. London Math. Soc. (2)},
   volume={73},
   date={2006},
   number={3},
   pages={607--624},
   issn={0024-6107},
   review={\MR{2241969}},
   doi={10.1112/S0024610706022708},
}

\end{biblist}
\end{bibdiv}

\end{document}